\titleformat{\section}{\large\bfseries\filcenter}{\thesection}{1em}{}
\titleformat{\subsection}{\bfseries}{\thesubsection}{1em}{}
\numberwithin{equation}{section}
\newtheorem{theorem}{Theorem}[section]
\newtheorem{lemma}[theorem]{Lemma}
\newtheorem{proposition}[theorem]{Proposition}
\theoremstyle{definition}
\theoremstyle{remark}
\newtheorem{remark}[theorem]{Remark}
\newtheorem{remarks}[theorem]{Remarks}
\titleformat{\section}{\large\bfseries\filcenter}{\thesection}{1em}{}
\titleformat{\subsection}{\bfseries}{\thesubsection}{1em}{}
\newcommand{\cf}{\textit{cf. }}
\newcommand{\RR}{{\mathbb{R}}}
\newcommand{\n}{{\mathtt{n}}}
\newcommand{\E}{\mathsf{E}}
\newcommand{\F}{\mathsf{F}}
\newcommand{\M}{\mathsf{M}}
\newcommand{\T}{\mathsf{T}}
\newcommand{\V}{\mathsf{V}}
\newcommand{\bM}{{\partial\M}}
\newcommand{\bSigma}{{\partial\Sigma}}
\newcommand{\oS}{\mathsf{S}}
\newcommand{\oB}{\mathsf{B}}
\def \bui#1#2{\mathrel{\mathop{\kern 0pt#1}\limits^{#2}}}
\newcommand{\End}{{\textnormal{End}\,}}
\newcommand{\supp}{{\textnormal{supp\,}}}
\newcommand{\fiber}[2]{\prec  #1\,|\, #2  \succ}
\definecolor{NiColor}{RGB}{77,77,255}
\begin{document}

\begin{center}

\vspace{5mm}

{\Large\bf  ON THE CAUCHY PROBLEM FOR \\[3mm] THE FARADAY TENSOR ON \\[3mm]  GLOBALLY HYPERBOLIC MANIFOLDS \\[5mm]
WITH TIMELIKE BOUNDARY}


\vspace{5mm}

{\bf by}

\vspace{5mm}
 { \bf Nicol\'o Drago$^1$,  Nicolas Ginoux $^2$ and Simone Murro $^3$}\\[1mm]
\noindent {\it $^1$ Dipartimento di Matematica, Universit\`a di Trento \& INFN-TIFPA, 
Italy 
}\\[1mm]
\noindent {\it  $^2$ Universit\'e de Lorraine \& CNRS \& IECL, F-57000 Metz, France
}\\[1mm]
\noindent {\it  $^3$ Dipartimento di Matematica, Universit\`a di Genova \& INdAM \& INFN, Italy 
}\\[4mm]
email: \ {\tt  nicolo.drago@unitn.it; nicolas.ginoux@univ-lorraine.fr; murro@dima.unige.it} \;\; 
\\[10mm]
\end{center}

\begin{abstract}
We study the well-posedness of the Cauchy problem for the Faraday tensor on globally hyperbolic manifolds with timelike boundary.
The existence of Green operators for the operator $\mathrm{d}+\delta$ and a suitable pre-symplectic structure on the space of solutions are discussed.
\end{abstract}

\paragraph*{Keywords:} Overdetermined initial-boundary value problem, Maxwell's 
equations, Faraday tensor, Cauchy problem, globally hyperbolic manifolds with 
timelike boundary.
\paragraph*{MSC 2020:} Primary 35Q61, 35N30; Secondary 58J45, 53C50.

\section{Introduction}
Electromagnetic interactions play a key role in the history of physics since they are related to the first successful example of unification of two 
apparently different fields, the electric and the magnetic one, into a single body, the Faraday tensor.
The latter tensor contains all the physical information both at a classical and at a quantum level.
Indeed, as noted for example in~\cite{Sak}, in all idealized and real experiments of the Aharonov-Bohm kind, the true observable is actually the flux of the magnetic component of the Faraday tensor which is present inside an impenetrable region, typically a cylinder.
It is far from the scope of this paper to discuss the details of this procedure, but it is sufficient to say that, on Minkowski background and in absence of sources, the
result is pretty much satisfactory.
Yet the situation starts to complicate itself as soon as it is assumed that a spacetime $\M$ has a non-trivial geometry.

In this paper we will be interested in the Cauchy problem for Maxwell's equations (for $k$-forms) $\delta F=j$ and $\mathrm{d}F=0$ on a globally hyperbolic manifold $\M$ with timelike boundary \cite{Ake-Flores-Sanchez-18}.
Within this setting, boundary conditions have to be imposed to ensure the well-posedness of the resulting Cauchy problem: For the case at end we will impose the vanishing of the normal component of the Faraday tensor $F$ at the boundary.

The well-posedness of the Cauchy problem allows to introduce advanced/retarded propagators for the operator $D=\mathrm{d}+\delta$.
This opens to the possibility of applying a standard quantization scheme \cite[Chap. 3]{aqft2} which is well-established for the case of the Faraday tensor on globally hyperbolic manifolds without boundaries \cite{Dappiaggi-Lang-2012,Dimock-92}, for $U(1)$-gauge theories \cite{Benini_2016,Benini_Dappiaggi_Hack_Schenkel_2014,Benini_Dappiaggi_Schenkel_2014} and for gauge theories on globally hyperbolic manifolds with timelike boundary \cite{Benini_Dappiaggi_Schenkel_2018,Dappiaggi-Drago-Longhi-2020}.


\paragraph{Statement of the problem and main results.}

Through this paper, $(\M,g)$ denotes a globally hyperbolic manifold  with 
timelike boundary $\partial\M$ as defined in \cite[Definition 
2.14]{Ake-Flores-Sanchez-18}, see also e.g. \cite[Definition 
2.1]{GinouxMurro2022}.
In more details, $(\M,g)$ is a connected, oriented smooth 
Lorentzian $n$-dimensional manifold $\M$ with boundary $\partial\M$ such that
$(\partial\M,g|_{\partial\M})$ is a Lorentzian manifold and there exists a smooth Cauchy temporal function $t:\M\to\RR$ such that
\begin{align*}
	\M=\RR\times\Sigma
	\qquad
	g=-\beta^2 d t^2+h_t\,,
\end{align*}
where $\beta:\RR \times \Sigma \to \RR$ is a smooth positive function, $h_t$ 
is a Riemannian metric on each slice
$\Sigma_t:=\{t\} \times \Sigma$ varying smoothly with $t$, and these slices are 
spacelike Cauchy hypersurfaces with boundary 
$\bSigma_t:=\{t\}\times\partial\Sigma$, namely achronal sets intersected 
exactly 
once by every inextensible timelike curve. 

The (sourceless) Maxwell's equations for the Faraday tensor $F\in\Omega^k(\M)$ are given by
satisfying 
\begin{equation*}
\mathrm{d}F=0 \qquad \text{and}\qquad \mathrm{d}\ast_g F=0\,.
\end{equation*}
 Clearly, if the boundary of $\bM$ is not empty, then the uniqueness of a 
solution to the Cauchy problem for $F$ can be expected only if a boundary 
condition is imposed. 
To this end,  we shall consider the boundary condition 
\begin{equation*}
\n\lrcorner F=0 
\end{equation*}
where the vector field ${\sf n}$ is the outward-pointing unit normal vector 
field along $\partial\M$. 
If we fix $\nu\in\Gamma(T^*\M_{|_{\partial\M}})$ to be a $1$-form such that 
$$\ker\nu_p=T_p\partial \M \,, \qquad\nu_p(\n_p)>0 \,,  \qquad 
\text{and}  \qquad \mathcal{L}_{\partial_t}\nu=0\,,$$
for all $p\in\partial\M$, then there exists a positive smooth function $c_t$ on 
$\partial\M$ such that 
$$\n_p=c_t\nu^{\sharp_t}$$ 
for all $p\in\partial\M$,
where $\sharp_t\colon T^*\Sigma\to T\Sigma$ denotes the musical isomorphism 
associated with $h_t$.
For later convenience we set
\begin{align*}
	\Omega_{c,\n}^k(\M)
&:=\{F\in\Omega_c^k(\M)\,|\,\n\lrcorner F=0\}\,,\\
	\Omega^\bullet_{c,\n,\delta}(\M)
	&:=\{\alpha\in\Omega_{c,\n}^\bullet(\M)\,|\,\delta \alpha=0\}\,,
	\\
	\Omega^\bullet_{c,\n,\mathrm{d}}(\M)
	&:=\{\alpha\in\Omega_{c,\n}^\bullet(\M)\,|\,\mathrm{d}\alpha=0\}\,.
\end{align*}

\noindent Within this setting, the main result of the paper is the following:
\begin{theorem}\label{Thm: well-posedness for Faraday's tensor initial-value problem with boundary conditions}
	Let $(M,g)$ be a globally hyperbolic manifold with timelike boundary and let be $j\in\Omega_{c,\n,\delta}^{k-1}(\M)$, $\zeta\in\Omega_{c,\mathrm{d}}^{k+1}(\M)$ and $F_0\in\Omega^k_c(\M)$  such that
	\begin{align*}
		(\operatorname{supp}(\zeta)\cup\operatorname{supp}(j))\cap\Sigma_0=\varnothing\,, \quad
		\operatorname{supp}(F_0)\cap\partial\M=\varnothing\,,\quad
		\mathrm{d}_{\Sigma_0} \iota_{\Sigma_0}^* F_0=0\,,\quad
		\mathrm{d}_{\Sigma_0}\iota_{\Sigma_0}^*\ast_g F_0=0\,.
	\end{align*}
	Then the Cauchy problem for the Faraday tensor
	\begin{subequations}\label{Eq: Faraday's tensor initial-value problem with boundary conditions}
		\begin{align}
			\label{Eq: F-homogeneous equation}
			\mathrm{d}F&=\zeta\\
			\label{Eq: F-inhomogeneous equation}
			\delta F&=j\\
			\label{Eq: F normal-boundary condition}
			\n\lrcorner F&=0\\
			\label{Eq: F-initial data}
			F|_{\Sigma_{0}}&=F_0
		\end{align}
	\end{subequations}
	has a unique solution $F\in\Omega_{sc,\n}^k(\M)$.
	Moreover,
	\begin{align}\label{Eq: finite speed of propagation}
		\operatorname{supp}(F)
		\subseteq J\left[\operatorname{supp}(F_0)\cup\operatorname{supp}(j)\cup\operatorname{supp}(\zeta)\right]\,,
	\end{align}
	where $J(A)$ denotes the causal development of $A$.
\end{theorem}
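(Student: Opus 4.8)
The plan is to reduce the overdetermined first-order system \eqref{Eq: F-homogeneous equation}--\eqref{Eq: F-inhomogeneous equation} to a single normally hyperbolic equation, to solve the latter with the prescribed initial and boundary data, and finally to recover the first-order equations by a constraint-propagation argument. Setting $D:=\mathrm{d}+\delta$, the equations $\mathrm{d}F=\zeta$ and $\delta F=j$ are jointly equivalent to $DF=\zeta+j$, with $\zeta+j\in\Omega^{k+1}_c(\M)\oplus\Omega^{k-1}_c(\M)$. Applying $D$ once more and using $\mathrm{d}\zeta=0$ and $\delta j=0$ (built into $\zeta\in\Omega^{k+1}_{c,\mathrm{d}}(\M)$ and $j\in\Omega^{k-1}_{c,\n,\delta}(\M)$) yields
\begin{equation*}
\square F=D(\zeta+j)=\delta\zeta+\mathrm{d}j=:\Xi\in\Omega^k_c(\M)\,,
\end{equation*}
where $\square=D^2=\mathrm{d}\delta+\delta\mathrm{d}$ is the d'Alembert--Hodge operator; it is normally hyperbolic, preserves the form degree, and satisfies $\operatorname{supp}(\Xi)\subseteq\operatorname{supp}(\zeta)\cup\operatorname{supp}(j)$.

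Since a normally hyperbolic equation requires two Cauchy data on $\Sigma_0$, I would prescribe $F|_{\Sigma_0}=F_0$ and fix the transversal datum from the first-order equation restricted to $\Sigma_0$. As $\operatorname{supp}(\zeta)\cup\operatorname{supp}(j)$ is disjoint from $\Sigma_0$, the constraint forces $DF|_{\Sigma_0}=0$; splitting $D$ into its tangential and transversal parts along $\Sigma_0$ and using that $F_0$ fixes all tangential derivatives, this equation determines $\nabla_{\n_{\Sigma_0}}F|_{\Sigma_0}$. The two hypotheses $\mathrm{d}_{\Sigma_0}\iota^*_{\Sigma_0}F_0=0$ and $\mathrm{d}_{\Sigma_0}\iota^*_{\Sigma_0}\ast_g F_0=0$ are exactly the tangential components of $\mathrm{d}F=0$ and $\delta F=0$ at $\Sigma_0$ (the Gauss-type constraints), and they guarantee that this transversal datum is consistently defined.

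Next I would solve the resulting wave initial--boundary value problem for $\square$, invoking the well-posedness theory for normally hyperbolic operators on globally hyperbolic manifolds with timelike boundary (in the spirit of \cite{GinouxMurro2022}), under the boundary condition $\n\lrcorner F=0$ supplemented by its natural complement $\n\lrcorner\mathrm{d}F=\n\lrcorner\zeta$; these are the absolute-type conditions, under which $\square$ is formally self-adjoint and the problem admits a unique solution $F\in\Omega^k_{sc,\n}(\M)$. The support properties of the associated Green operators then give the finite-speed-of-propagation inclusion \eqref{Eq: finite speed of propagation}, since $\operatorname{supp}(F)$ is contained in the causal development of $\operatorname{supp}(F_0)\cup\operatorname{supp}(\Xi)$.

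The crux is to show that this $F$ actually solves the first-order system. Set $w:=DF-(\zeta+j)=(\mathrm{d}F-\zeta)+(\delta F-j)$. By construction $Dw=\square F-\Xi=0$, so $\square w=0$ as well, while $\mathrm{d}(\mathrm{d}F-\zeta)=0$ and $\delta(\delta F-j)=0$ hold identically. By the choice of Cauchy data $w|_{\Sigma_0}=0$, and then $Dw|_{\Sigma_0}=0$ together with $w|_{\Sigma_0}=0$ forces $\nabla_{\n_{\Sigma_0}}w|_{\Sigma_0}=0$ (the tangential part of $Dw$ already vanishes, so the transversal part must too). Provided $w$ inherits the homogeneous boundary conditions, the uniqueness statement for the wave problem applied to $\square w=0$ yields $w=0$, that is $\mathrm{d}F=\zeta$ and $\delta F=j$; uniqueness for the full problem then follows by applying the same argument to the difference of two solutions. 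I expect the main obstacle to lie precisely in this last boundary analysis: verifying that $\n\lrcorner F=0$ together with its complement is a genuinely well-posed (maximal/dissipative) boundary condition for $\square$, and — most delicately — that the normal component $\n\lrcorner(\delta F-j)$ of $w$ vanishes on $\partial\M$, for which the hypothesis $\n\lrcorner j=0$ encoded in $j\in\Omega^{k-1}_{c,\n,\delta}(\M)$ (as opposed to the absence of any normal condition on $\zeta$) should be the decisive ingredient.
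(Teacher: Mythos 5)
Your route is genuinely different from the paper's. The paper never passes to the second-order operator: it decomposes $F$ into electric and magnetic components, $F=\mathrm{d}t\wedge\ast_{h_t}F_E+F_B$, shows that two of the four resulting equations form a first-order symmetric hyperbolic system of constant characteristic for $(F_E,F_B)$ with $\n\lrcorner F_B=0$ as a self-adjoint admissible boundary condition, applies the Friedrichs-system well-posedness theorem of \cite{GinouxMurro2022} directly, and then propagates the remaining equations and the residual boundary condition as constraints along the flow. You instead square $D$ and solve an initial-boundary value problem for $\square$. To your credit, the constraint-propagation half of your argument can be closed: $\n\lrcorner w=0$ on $\partial\M$ follows from $\n\lrcorner j=0$ together with the identity $\delta^{\partial\M}\n\lrcorner=-\n\lrcorner\delta$ applied to $\n\lrcorner F=0$, and $\n\lrcorner\mathrm{d}w_{k-1}=0$ follows by rewriting $\mathrm{d}w_{k-1}=-\delta w_{k+1}$ (a consequence of $\square F=\Xi$) and applying the same identity; your determination of the transversal Cauchy datum from $DF|_{\Sigma_0}=0$ is also legitimate, since $\sigma_D(\mathrm{d}t)$ is invertible for a timelike covector.

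The genuine gap is the step you yourself defer: the existence, uniqueness and finite propagation speed for the second-order problem $\square F=\Xi$ with the absolute-type boundary conditions $\n\lrcorner F=0$ and $\n\lrcorner\mathrm{d}F=\n\lrcorner\zeta$ on a globally hyperbolic manifold with timelike boundary. You invoke this ``in the spirit of'' \cite{GinouxMurro2022}, but that reference treats only first-order symmetric hyperbolic systems with maximal dissipative boundary conditions; it provides no well-posedness statement for normally hyperbolic operators with these Neumann-type conditions, and formal self-adjointness of $\square$ alone does not produce the required energy estimates. Since every part of your argument (existence, the support inclusion, and the uniqueness used twice, once for $w$ and once for the difference of two solutions) rests on this unproved input, the proof is incomplete as written. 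Such a second-order theory does exist in the literature (it is essentially the content of \cite{Dappiaggi-Drago-Longhi-2020}, which the paper's final remark points to), but the paper deliberately avoids it precisely because the electric--magnetic reformulation reduces everything to a first-order Friedrichs system for which \cite{GinouxMurro2022} applies off the shelf; to complete your proposal you would either have to import and verify the hypotheses of that second-order theory or carry out a reduction of the type the paper performs.
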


\begin{remarks}\noindent
\begin{enumerate}
	\item
	It is worth pointing out that Theorem \ref{Thm: well-posedness for Faraday's tensor initial-value problem with boundary conditions} proves that any closed compactly supported form $\zeta\in\Omega_{c,\mathrm{d}}^{k+1}(\M)$ is necessarily exact, $\zeta=\mathrm{d}F$, for a spacelike form $F\in\Omega_{sc}^k(\M)$.
	(A similar argument applies for the coexactness of $j$ in Equation \eqref{Eq: F-inhomogeneous equation}.)
	Actually, the inclusion $\Omega_{c,\mathrm{d}}^{k+1}(\M)\subset\mathrm{d}\Omega^k(\M)$ can be proved by cohomological arguments\footnote{We are grateful to M. Benini for this observation.} and is based on the fact that $\M$ is homeomorphic to $\RR\times\Sigma$.
	Indeed, let $f\in C_c^\infty(\mathbb{R})$ be such that $\int_{\mathbb{R}}f(t)\mathrm{d}t=1$ and consider the following maps between chain complexes
	\begin{align*}
		\Omega_c^{\bullet-1}(\Sigma)
		\xrightarrow{f\mathrm{d}t\wedge\cdot }  \Omega_c^{\bullet}(\M)
		\xrightarrow{\operatorname{Id}} \Omega^{\bullet}(\M)
		\xrightarrow{\iota_\Sigma^*} \Omega^{\bullet}(\Sigma)\,,
	\end{align*}
	where the $\operatorname{Id}$ is the identity map while $\iota_\Sigma^*$ is the pull-back to $\Sigma$.
	All these maps induces (de Rham) cohomology maps ---denoted by $[f\mathrm{d}t\wedge\cdot]$, $[\operatorname{Id}]$, $[\iota_\Sigma^*]$--- and by \cite[Prop. 4.7]{Bott_1982} we have $H_c^{\bullet-1}(\Sigma)\simeq H^\bullet_c(\M)$ while \cite[Prop. 4.1]{Bott_1982} proves that $ H^\bullet(\M) \simeq H^\bullet(\Sigma)$.
	Let now $[\omega]_c\in H^\bullet_c(\M)$.
	Since $H^\bullet_c(\M)\simeq H^{\bullet-1}_c(\M)$ there exists $[\alpha]_c\in H^{\bullet-1}_c(\Sigma)$ such that $[\omega]_c=[f\mathrm{d}t\wedge\alpha]_c$.
	Considering the equivalence class $[\operatorname{Id}][f\mathrm{d}t\wedge\alpha]_c=[f\mathrm{d}t\wedge\alpha]\in H^\bullet(\M)$ and the isomorphism $H^\bullet(\M)\simeq H^\bullet(\Sigma)$ we then find
	\begin{align*}
		[f\mathrm{d}t\wedge\alpha]
		=[\iota_\Sigma^*]^{-1}[\iota_\Sigma^*][f\mathrm{d}t\wedge\alpha]
		=[\iota_\Sigma^*]^{-1}[\iota_\Sigma^*(f\mathrm{d}t\wedge\alpha)]
		=[0]\,,
	\end{align*}
	where in the last line we used $\pi_\Sigma \circ f\mathrm{d}t\wedge\cdot=0$.
	This proves that $[\operatorname{Id}]$ is the zero map, hence the claim.
		
	\item Our analysis extends straightforwardly to the Cauchy problem for a Faraday tensor coupled with the boundary condition $ \n\lrcorner \ast_g F_0=0\,.$
		
	\item
	Theorem \ref{Thm: well-posedness for Faraday's tensor initial-value problem with boundary conditions} can be generalized by dropping the assumption $\operatorname{supp}(F_0)\cap\partial\M=\varnothing$ and $(\operatorname{supp}(\zeta)\cup\operatorname{supp}(j))\cap\Sigma_0=\varnothing$.
	This requires introducing suitable "compatibility conditions" between $F_0$ and $j$ as described in \cite{GinouxMurro2022}.
	We will refrain from discussing this case as the hypotheses of Theorem \ref{Thm: well-posedness for Faraday's tensor initial-value problem with boundary conditions} are sufficient for the application we have in mind, \textit{cf.} Proposition \ref{Prop: advanced, retarded propagators}.
		
	\item
	The boundary condition \eqref{Eq: F normal-boundary condition} can be derived with the following variational argument ---\textit{cf.} \cite[Rmk. 27]{Dappiaggi-Drago-Longhi-2020}.
	In this setting one introduces the formal action
		\begin{align*}
			I(A)=\frac{1}{2}(\mathrm{d}A,\mathrm{d}A)_\M
			=\frac{1}{2}\int_\M \mathrm{d}A\wedge\ast_g \mathrm{d}A\,,
		\end{align*}
		where the convergence of the integral is not discussed.
		The homogeneous Maxwell's equations $\delta\mathrm{d}A=0$ are recovered by requiring $A$ to be a critical point of the formal action $I$, namely
		\begin{align*}
			\frac{\mathrm{d}}{\mathrm{d}\varepsilon}I(A+\varepsilon\alpha)\bigg|_{\varepsilon=0}=0
			\qquad\forall\alpha\in\Omega_c^{k-1}(\M)\,,
		\end{align*}
		where $\alpha\in\Omega^{k-1}_c(\M)$ is an arbitrarily chosen compactly supported smooth $(k-1)$-form.
		Notably, although $I(A)$ may be ill-defined, the derivative $\dfrac{\mathrm{d}}{\mathrm{d}\varepsilon}I(A+\varepsilon\alpha)\bigg|_{\varepsilon=0}$ is always well-defined and it can be written as
		\begin{align*}
			\frac{\mathrm{d}}{\mathrm{d}\varepsilon}I(A+\varepsilon\alpha)\bigg|_{\varepsilon=0}
			=(\mathrm{d}A,\mathrm{d}\alpha)_\M
			=(\delta\mathrm{d}A,\alpha)_\M
			+(\n\lrcorner\mathrm{d}A,\iota_{\partial \M}^*\alpha)_{\partial \M}\,,
		\end{align*}
		where $(\cdot\,,\cdot)_{\partial \M}$ is the canonical pairing between forms on $\partial \M$.
		Because $\alpha$ can be chosen arbitrarily, this leads to $\delta\mathrm{d}A=0$ and $\n\lrcorner \mathrm{d}A=0$.
		\item
		The 
		well-posedness of the Cauchy problem will guarantee the existence 
		of Green operators (\cf Proposition~\ref{Prop: advanced, retarded propagators}) which play a pivotal role 
		in the algebraic approach to 
		linear quantum field theory, see e.g.~\cite{gerard, aqft2, wave} for textbooks and
		\cite{BaerGinoux2011,FK,aqft2} for recent reviews.
	\end{enumerate}
\end{remarks}

\paragraph{Plan of the proof.}
As a preliminary, in Section~\ref{sec:2} we will decompose the Faraday tensor $F\in\Omega^k(\M)$ into its electric and magnetic components $F_E,F_B$, \textit{cf.} Equation~\eqref{Eq: electric and magnetic 2-form}.
The equations of motion \eqref{Eq: F-homogeneous equation}-\eqref{Eq: F-inhomogeneous equation} are then written in terms of $F_E,F_B$ leading to the standard formulation of Maxwell's equations in terms of electric and magnetic "fields".
Within this setting the system made by \eqref{Eq: F-homogeneous equation}-\eqref{Eq: F-inhomogeneous equation} decouples in a system of 2 dynamical equations, which determine $F_E,F_B$ once initial data and boundary conditions are provided, and 2 constraint equations, which must be fulfilled along the motion and in particular by the initial data.
Similarly, the initial condition \eqref{Eq: F-initial data} leads to initial conditions for $F_E,F_B$; moreover, the same applies for the boundary condition \eqref{Eq: F normal-boundary condition} which leads to 2 boundary conditions for $F_E$ and $F_B$.
As we will see more in details, the boundary conditions we obtain are somehow redundant: The first one can be used to determine $F_E,F_B$ uniquely ---together with the initial data and the dynamical equations of motion--- whereas the latter plays the role of a constraint.
Summing up, the initial-value problem with boundary conditions \eqref{Eq: Faraday's tensor initial-value problem with boundary conditions} for $F$ will be turned into an initial-value problem with boundary conditions and constraints for $F_E,F_B$.

In Section~\ref{sec:3}, we will solve the initial-boundary value problem for $F_E,F_B$ relying on the results of \cite{GinouxMurro2022}.
Henceforth, in Section~\ref{sec:4} we will prove that the constraints are fulfilled once they are fulfilled by the initial data.
We conclude our paper with Section~\ref{sec:5}, devoted to prove the existence of Green operators for the Faraday tensor.
This leads to a pre-symplectic form on the space of solutions to the Cauchy problem for the Faraday tensor: To this avail, however, one has to consider Faraday of all degrees in a unified non-trivial fashion.

\paragraph{Acknowledgment.}
We are grateful to Marco Benini for helpful discussions related to the topic of this paper.
This work was produced within the activities of the INdAM-GNFM.
N.D. acknowledges the support of the GNFM-INdAM Progetto Giovani \textit{Non-linear sigma models and the Lorentzian Wetterich equation}.
S.M. acknowledges the support of the INFN-sezione di Genova as well as the support of the GNFM-INdAM Progetto Giovani \textit{Feynman propagator for Dirac fields: a microlocal analytic approach.}

\section{Reformulation of the Cauchy problem}\label{sec:2}

Let $\pi_2\colon \M\to\Sigma$ be the projection on the second factor in the 
Cartesian product $\M=\mathbb{R}\times\Sigma$ and let 
$\V^\bullet:=\pi_2^*(\Lambda^\bullet T^*\Sigma)\to \M$ be the pull-back over $\M$ of the exterior bundle of $\Sigma$.
The \textbf{electric and the magnetic components of a given $F\in\Omega^k(\M)$} are the forms $F_B\in\Gamma(\V^k)$ and $F_E\in\Gamma(\V^{n-k})$ defined by
\begin{align}\label{Eq: electric and magnetic 2-form}
	F=\mathrm{d}t\wedge\ast_{h_t} F_E+F_B\,,
\end{align}
where $\ast_{h_t}$ denotes the Hodge dual with respect to the metric $h_t$.
More explicitly we have
\begin{align*}
\ast_{h_t}F_E:=\partial_t\lrcorner F\,,
\qquad
F_B=F-\mathrm{d}t\wedge\ast_{h_t}F_E\,,
\end{align*}
where $\partial_t\lrcorner$ denotes the interior product with 
$\partial_t$.
Clearly $F_E,F_B$ determines $F$ uniquely and viceversa.

\begin{remark}\label{Rmk: Hodge dual relations}
	For later convenience we shall recollect here some useful identities concerning the differential, codifferential, Hodge operators, pull-backs and interior products.
	Let $(M,g)$ be an $m$-dimensional pseudo-Riemannian manifold with possibly non-empty boundary $\partial M$; in most applications below, $M^m$ will be either the spacetime $\M$, its boundary $\bM$ together with its induced Lorentzian metric or the Cauchy hypersurface $\Sigma$ with Riemannian metric $h_t$.
	We denote by $\sigma_M$ the index of $M$.
	The orientation of $\M$ will be chosen such that, for any oriented pointwise basis $(e_1^*,\ldots,e_{n-1}^*)$ of $T^*\Sigma$, the $n$-tuple $(\mathrm{d}t,e_1^*,\ldots,e_{n-1}^*)$ is an oriented basis of $T^*\M$.
	We denote by $\mathrm{d}_\bullet\colon\Omega^\bullet(M)\to\Omega^{\bullet+1}(M)$ the differential on $M$ while $\ast_\bullet\colon\Omega^\bullet(M)\to\Omega^{m-\bullet}(M)$ denotes the Hodge dual of $(M,g)$.
	To emphasize the difference between operators on $\M$ and on $\bM$, the differential and Hodge dual of $\partial M$ will be denoted by $\mathrm{d}_\bullet^{\partial M}$ and $\ast_\bullet^{\partial M}$ respectively ---we will suppress the superscript when the latter is clear from the context.
	We then have
	\begin{align*}
		\ast_{m-k}\ast_k&=(-1)^{k(m-k)+\sigma_M}
		\quad
		(\Rightarrow
		\ast_k^{-1}=(-1)^{k(m-k)+\sigma_M}\ast_{m-k})
		\\
		\delta_k&=\mathrm{d}_k^*=(-1)^k(\ast_{k-1})^{-1}\mathrm{d}_{m-k}\ast_k
		\\
		\ast_{k-1}\delta_k&=(-1)^k\mathrm{d}_{m-k}\ast_k
		\qquad
		\delta_{m-k}\ast_k=(-1)^{k+1}\ast_{k+1}\mathrm{d}_k
		\\
		\ast^{\partial M}_{k-1}\n\lrcorner
		&=\iota_{\partial M}^*\ast_k
		\\
		\n\lrcorner\ast_{m-k}
		&=(-1)^{m-k+\sigma_M+\sigma_{\partial M}}\ast^{\partial M}_{m-k}\iota_{\partial M}^*
		\\
		X^\flat\wedge\ast_g\omega&=(-1)^{k+1}\ast_g(X\lrcorner\,\omega)\\
		\ast_g(X^\flat\wedge\omega)&=(-1)^kX\lrcorner\ast_g\omega
		\\
		\delta^{\partial M}_{m-k-1}\n\lrcorner|_{\Omega^{m-k}(\M)}
		&=-\n\lrcorner\delta_{m-k}\,,
	\end{align*}
for all $\omega\in\Lambda^kT^*M$ and $X\in TM$.
Moreover, defining the pointwise nondegenerate inner product $\langle\cdot\,,\cdot\rangle$ on $\Lambda^kT^*M$ via
\[\langle\omega,\omega'\rangle:=(-1)^{\sigma_M}\cdot\ast_g(\omega\wedge\ast_g\omega'),\]
we have, for all $X\in TM$, $\omega\in\Lambda^kT^*M$ and $\omega'\in\Lambda^{k+1}T^*M$,
\begin{eqnarray*}
\langle X^\flat\wedge\omega,\omega'\rangle&=&(-1)^{\sigma_M}\cdot\ast_g(X^\flat\wedge\omega\wedge\ast_g\omega')\\
&=&(-1)^{\sigma_M}\cdot(-1)^k\cdot\ast_g(\omega\wedge X^\flat\wedge\ast_g\omega')\\
&=&(-1)^{\sigma_M}\cdot(-1)^k\cdot(-1)^k\ast_g(\omega\wedge\ast_g(X\lrcorner\,\omega'))\\
&=&\langle\omega,X\lrcorner\,\omega'\rangle.
\end{eqnarray*}
Moreover, for all $\omega\in\Lambda^kT^*M$ and $\omega'\in\Lambda^{m-k}T^*M$,
\begin{eqnarray*}
\langle\ast_g\omega,\omega'\rangle&=&(-1)^{k(m-k)+\sigma_M}\cdot\langle\ast_g\omega,\ast_g^2\omega'\rangle\\
&=&(-1)^{k(m-k)+2\sigma_M}\cdot\langle\omega,\ast_g\omega'\rangle\\
&=&(-1)^{k(m-k)}\cdot\langle\omega,\ast_g\omega'\rangle.
\end{eqnarray*}

\end{remark}

The next lemma converts equations \eqref{Eq: F-homogeneous equation}-\eqref{Eq: F-inhomogeneous equation} into dynamical and constraint equations for $F_E,F_B$.

\begin{lemma}\label{Lem: Maxwell's equation for electric and magnetic component}
	A $k$-form $F\in\Omega^k(\M)$ solves \eqref{Eq: F-homogeneous equation}-\eqref{Eq: F-inhomogeneous equation} if and only if its electric 
	and magnetic components $F_E,F_B$ solve
\begin{subequations}\label{Eq: Maxwell's equations for electric and magnetic components}
	\begin{align}
		\label{Eq: E dynamical equation}
		\beta^{-1}\mathcal{L}_{\partial_t}(\beta^{-1}F_E)+(-1)^{(n-k+1)(k+1)+1}\beta^{-1}\mathrm{d}_\Sigma(\ast_{h_t}\beta F_B)&=(-1)^{(n-k)(k+1)}\ast_{h_t}j_B\,,
		\\
		\label{Eq: B dynamical equation}
		\mathcal{L}_{\partial_t}F_B
		-\mathrm{d}_\Sigma\ast_{h_t}F_E&=\ast_{h_t}\zeta_E\,,
		\\
		\label{Eq: E constrained equation}
		\mathrm{d}_\Sigma(\beta^{-1}F_E)&=(-1)^{n-k}\beta^{-1} j_E\,,
		\\
		\label{Eq: B constrainted equation}
		\mathrm{d}_\Sigma F_B&=\zeta_B\,,
	\end{align}
\end{subequations}
where $\mathrm{d}_\Sigma$ denotes the differential on $\Sigma$, while $j_E\in\Gamma(\V^{n+1-k})$ and $j_B\in\Gamma(\V^{k-1})$ are the electric and magnetic components of $j\in\Omega^{k-1}(\M)$.
\end{lemma}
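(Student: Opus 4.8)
The statement is an equivalence between a single identity for $F$ and a system of four identities for the pair $(F_E,F_B)$, so the natural strategy is to compute $\mathrm{d}F$ and $\delta F$ explicitly in terms of $F_E,F_B$ and then read off the four equations by matching components. The structural fact I would exploit is that every $\ell$-form $\omega$ on the product $\M=\RR\times\Sigma$ decomposes \emph{uniquely} as $\omega=\mathrm{d}t\wedge\alpha+\gamma$ with $\alpha,\gamma\in\Gamma(\V^\bullet)$ spatial (possibly $t$-dependent), and that under this splitting
\[
\mathrm{d}\omega=\mathrm{d}t\wedge\big(\mathcal{L}_{\partial_t}\gamma-\mathrm{d}_\Sigma\alpha\big)+\mathrm{d}_\Sigma\gamma\,,
\]
which follows from $\mathrm{d}(\mathrm{d}t\wedge\alpha)=-\mathrm{d}t\wedge\mathrm{d}_\Sigma\alpha$ together with $\mathrm{d}\gamma=\mathrm{d}t\wedge\mathcal{L}_{\partial_t}\gamma+\mathrm{d}_\Sigma\gamma$. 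Since $\mathrm{d}t\wedge(\cdot)\oplus(\cdot)$ is a direct sum, matching the two parts of an identity is automatically an "if and only if", so no separate converse argument is needed.

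For the equation $\mathrm{d}F=\zeta$ I would simply insert $F=\mathrm{d}t\wedge\ast_{h_t}F_E+F_B$ into the displayed splitting of $\mathrm{d}$, obtaining
\[
\mathrm{d}F=\mathrm{d}t\wedge\big(\mathcal{L}_{\partial_t}F_B-\mathrm{d}_\Sigma(\ast_{h_t}F_E)\big)+\mathrm{d}_\Sigma F_B\,,
\]
and compare with $\zeta=\mathrm{d}t\wedge\ast_{h_t}\zeta_E+\zeta_B$. The temporal part then yields the dynamical equation \eqref{Eq: B dynamical equation} and the spatial part yields the constraint \eqref{Eq: B constrainted equation}; this half involves no metric factors and is essentially immediate.

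The equation $\delta F=j$ is the substantive part. Rather than compute $\delta$ directly, I would convert it into a statement about $\mathrm{d}\ast_g F$ via the identity $\ast_{k-1}\delta_k=(-1)^k\mathrm{d}_{n-k}\ast_k$ of Remark \ref{Rmk: Hodge dual relations}, which (applying the invertible operator $\ast_{k-1}$) makes $\delta F=j$ equivalent to $\mathrm{d}(\ast_g F)=(-1)^k\ast_g j$. The crux is then to express $\ast_g F$ through $F_E,F_B$. Using an orthonormal coframe adapted to $g=-\beta^2\mathrm{d}t^2+h_t$ with timelike leg $\beta\,\mathrm{d}t$, one checks the two transfer rules $\ast_g\,\omega=(-1)^\ell\beta\,\mathrm{d}t\wedge\ast_{h_t}\omega$ and $\ast_g(\mathrm{d}t\wedge\omega)=-\beta^{-1}\ast_{h_t}\omega$ for a spatial $\ell$-form $\omega$; these exhibit $\ast_g F$ as a spatial part proportional to $\beta^{-1}F_E$ plus a temporal part proportional to $\beta\,\mathrm{d}t\wedge\ast_{h_t}F_B$, with degree-dependent signs coming from $\ast_{h_t}^2$. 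Feeding this back into the same splitting of $\mathrm{d}$ and comparing temporal and spatial parts with the analogous decomposition of $(-1)^k\ast_g j$ produces the dynamical equation \eqref{Eq: E dynamical equation} and the constraint \eqref{Eq: E constrained equation} respectively; the factors $\beta^{-1}$ and $\beta$ in those equations are precisely the ones generated by the transfer rules.

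I expect the only real obstacle to be the sign and $\beta$-bookkeeping in the last step: getting the transfer rules for $\ast_g$ right (the timelike direction contributes the Lorentzian index $\sigma_\M$ and the $\beta$ factors) and correctly tracking the powers of $-1$ coming from $\ast_{h_t}^2$ on forms of the relevant degrees $n-k$ and $n-k+1$. Everything else — the product splitting of $\mathrm{d}$ and the matching of independent summands — is routine once these identities are in hand, and I would confirm the final exponents by reducing them modulo $2$ against those displayed in \eqref{Eq: E dynamical equation}--\eqref{Eq: E constrained equation}.
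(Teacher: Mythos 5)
Your proposal follows essentially the same route as the paper's proof: the $\mathrm{d}F=\zeta$ half is handled by the same product splitting of $\mathrm{d}$ and direct matching of temporal and spatial parts, and the $\delta F=j$ half is likewise converted to $\mathrm{d}(\ast_g F)=(-1)^k\ast_g j$ and treated via the same transfer rules $\ast_g\omega=(-1)^\ell\beta\,\mathrm{d}t\wedge\ast_{h_t}\omega$ relating $\ast_g$ to $\ast_{h_t}$. The sign bookkeeping you flag as the remaining work is exactly what the paper carries out, and your stated identities are consistent with it.
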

\begin{proof}
	We recall that the differential $\mathrm{d}$ on $\M$ and the differential 
	$\mathrm{d}_\Sigma$ on $\Sigma$ are related by
	\begin{align*}
		\mathrm{d}\omega=\mathrm{d}t\wedge\partial_t\lrcorner\mathrm{d}\omega
		+\mathrm{d}_\Sigma\iota_\Sigma^*\omega\,,
	\end{align*}
	for all $\omega\in\Omega^k(\M)$.
	By direct inspection we have
	\begin{align*}
		\zeta=\mathrm{d}F&=
		-\mathrm{d}t\wedge\mathrm{d}_\Sigma\ast_{h_t}F_E
		+\mathrm{d}t\wedge\partial_t\lrcorner\mathrm{d}F_B
		+\mathrm{d}_\Sigma F_B
		&\textrm{Eq. } \eqref{Eq: electric and magnetic 2-form}
		\\&=\mathrm{d}t\wedge\left[\mathcal{L}_{\partial_t}F_B
		-\mathrm{d}_\Sigma\ast_{h_t}F_E\right]
		+\mathrm{d}_\Sigma F_B
		&\partial_t\lrcorner F_B=0\,,
	\end{align*}
	which leads to Equations \eqref{Eq: B dynamical equation} and \eqref{Eq: B 
	constrainted equation} once we consider the decomposition $\zeta=\mathrm{d}t\wedge\ast_{h_t}\zeta_E+\zeta_B$.

	For what concerns Equations \eqref{Eq: E dynamical equation} and \eqref{Eq: E constrained equation} we consider the Hodge dual of Equation \eqref{Eq: F-inhomogeneous equation}:
	\begin{align*}
		\ast_g j
		=\ast_g\delta F
		=(-1)^k\mathrm{d}\ast_g F\,.
	\end{align*}
	Moreover, for all $\omega\in\Gamma(\V^k)$ we have $\beta dt\wedge\ast_{h_t}\omega=(-1)^k\ast_g\omega$, which implies
\begin{align*}
\ast_g F&=\beta^{-1}\ast_g(\beta\mathrm{d}t\wedge\ast_{h_t} F_E)+\ast_g F_B\\
&=(-1)^{n-k}\beta^{-1}\ast_g^2F_E+\ast_g F_B\\
&=(-1)^{(n-k)(k+1)+\sigma_{\M}}\beta^{-1}F_E+(-1)^k\beta\mathrm{d}t\wedge\ast_{h_t}F_B\\
&=(-1)^{(n-k)(k+1)+1}\beta^{-1}F_E+(-1)^k\beta\mathrm{d}t\wedge\ast_{h_t}F_B
\end{align*}
and similarly
\begin{align*}
	\ast_g j&=\ast_g(\mathrm{d}t\wedge\ast_{h_t}j_E+j_B)\\
	&=\beta^{-1}\ast_g(\beta\mathrm{d}t\wedge\ast_{h_t} j_E)+\ast_g j_B\\
	&=(-1)^{n-k+1}\beta^{-1}\ast_g^2 j_E+(-1)^{k-1}\beta\mathrm{d}t\wedge\ast_{h_t}j_B\\
	&=(-1)^{n-k+1+(n-k+1)(k-1)+\sigma_{\M}}\beta^{-1}j_E+(-1)^{k-1}\beta\mathrm{d}t\wedge\ast_{h_t}j_B\\
	&=(-1)^{k(n-k+1)+1}\beta^{-1}j_E+(-1)^{k-1}\beta\mathrm{d}t\wedge\ast_{h_t}j_B\,.
\end{align*}
Therefore,
\begin{eqnarray*}
	\mathrm{d}\ast_g F&=&(-1)^{(n-k)(k+1)+1}\mathrm{d}(\beta^{-1}F_E)+(-1)^k\mathrm{d}(\mathrm{d}t\wedge\ast_{h_t}\beta F_B)\\
	&=&(-1)^{(n-k)(k+1)+1}\mathrm{d}t\wedge\partial_t\lrcorner\,\mathrm{d}(\beta^{-1}F_E)+(-1)^{(n-k)(k+1)+1}\mathrm{d}_\Sigma(\beta^{-1}F_E)\\
	&&+(-1)^{k+1}\mathrm{d}t\wedge \mathrm{d}_\Sigma(\ast_{h_t}\beta F_B)\\
	&=&\mathrm{d}t\wedge\left((-1)^{(n-k)(k+1)+1}\mathcal{L}_{\partial_t}(\beta^{-1}F_E)+(-1)^{k+1}\mathrm{d}_\Sigma(\ast_{h_t}\beta F_B)\right)\\
	&&+(-1)^{(n-k)(k+1)+1}\mathrm{d}_\Sigma(\beta^{-1}F_E).
\end{eqnarray*}
It can be deduced that $\mathrm{d}\ast_g F=(-1)^k\ast_g j$ if and only if
\[\left\{\begin{array}{rll}(-1)^{(n-k)(k+1)+1}\mathcal{L}_{\partial_t}(\beta^{-1}F_E)+(-1)^{k+1}\mathrm{d}_\Sigma(\ast_{h_t}\beta F_B)&=&-\beta\ast_{h_t}j_B\\(-1)^{(n-k)(k+1)+1}\mathrm{d}_\Sigma(\beta^{-1}F_E)&=&(-1)^{k(n-k)+1}\beta^{-1}j_E\end{array}\right.,\]
that is
\begin{align*}
	\left\{
	\begin{array}{rll}
		\beta^{-1}\mathcal{L}_{\partial_t}(\beta^{-1}F_E)+(-1)^{(n-k+1)(k+1)+1}\beta^{-1}\mathrm{d}_\Sigma(\ast_{h_t}\beta F_B)
		&=&(-1)^{(n-k)(k+1)}\ast_{h_t}j_B
		\\
		\mathrm{d}_\Sigma(\beta^{-1}F_E)
		&=&(-1)^{n-k}\beta^{-1} j_E
	\end{array}\right..
\end{align*}
This leads to Equations \eqref{Eq: E dynamical equation} and \eqref{Eq: E constrained equation}.
\end{proof}

\begin{remark}
	The constraint $\delta j=0$ on the current $j\in\Omega^{k-1}_{c,\n,\delta}(\M)$ assumed in Theorem \ref{Thm: well-posedness for Faraday's tensor initial-value problem with boundary conditions} reduces to the standard continuity equation in terms of $j_E,j_B$:
	\begin{align}\label{Eq: j continuity equation}
		\mathcal{L}_{\partial_t}[\beta^{-1}j_E]
		+(-1)^{k(n-k)+1}\mathrm{d}_\Sigma[\beta\ast_{h_t} j_B]=0\,,
		\qquad
		\mathrm{d}_\Sigma[\beta^{-1}j_E]=0\,.
	\end{align}
	Similarly $\zeta\in\Omega_{c,\mathrm{d}}^{k+1}(\M)$ has to be closed, therefore,
	\begin{align}\label{Eq: zeta continuity equation}
		\mathcal{L}_{\partial_t}\zeta_B-\mathrm{d}_\Sigma\ast_{h_t}\zeta_E=0\,,
		\qquad
		\mathrm{d}_\Sigma\zeta_B=0\,.
	\end{align}
\end{remark}

Thus, Equations \eqref{Eq: F-homogeneous equation}-\eqref{Eq: F-inhomogeneous equation} can be recast into Equations \eqref{Eq: Maxwell's equations for electric and magnetic components}.
Notice that the latter consists of two dynamical equations \eqref{Eq: E dynamical equation}-\eqref{Eq: B dynamical equation} and two constraint equations \eqref{Eq: E constrained equation}-\eqref{Eq: B constrainted equation}.
In the next section we will prove that Equations \eqref{Eq: E dynamical equation}-\eqref{Eq: B dynamical equation} define a symmetric hyperbolic system \cite[Def. 2.4-2.5]{GinouxMurro2022}.
Before that, we observe that the boundary condition \eqref{Eq: F normal-boundary condition} can be equivalently written in terms of the electric and magnetic components $F_E,F_B$ as
\begin{align}
	\label{Eq: B boundary condition}
	\n\lrcorner\ast_{h_t}F_E=0
	\qquad(\Leftrightarrow\iota_{\partial\Sigma_t}^*F_E=0)
	\\
	\label{Eq: E boundary condition}
	\n\lrcorner F_B=0
	\qquad(\Leftrightarrow\iota_{\partial\Sigma_t}^*\ast_{h_t}F_B=0)\,.
\end{align}
As we will see, in order to apply the results of \cite{GinouxMurro2022} only one among \eqref{Eq: B boundary condition}-\eqref{Eq: E boundary condition} is needed ---in the following we will choose \eqref{Eq: B boundary condition}.
The remaining boundary condition is redundant, in fact, it plays the role of an additional constrained boundary condition.

\section{Maxwell's equations as a constrained symmetric hyperbolic system}\label{sec:3}

We now recast Equations \eqref{Eq: E dynamical equation}-\eqref{Eq: B dynamical equation} into a symmetric hyperbolic system.
Following \cite[Def. 2.4-2.5]{GinouxMurro2022} we recall that a differential operator $\oS\colon\Gamma(\E)\to\Gamma(\E)$ on a Riemannian vector bundle $\E\to\M$, is called \textbf{symmetric hyperbolic system} over $\M$ if
\begin{enumerate}
	\item[(S)]
	The principal symbol $\sigma_\oS (\xi) \colon \E_p \to \E_p$ is pointwise self-adjoint resp. symmetric with respect to $\fiber{\cdot}{\cdot}_p$ for every $\xi\in \T^*_p\M$ and for every $p \in \M$ ---here $\fiber{\cdot}{\cdot}_p$ denotes the Riemannian resp. symmetric fiber pairing at $\E_p$;
	
	\item[(H)]\label{conditionH}
	For every future-directed timelike covector $\tau\in\T_p^*\M$, the bilinear form $\fiber{\sigma_\oS (\tau) \cdot}{\cdot}_p$ is positive definite on $\E_p$ for every $p\in\M$.
\end{enumerate}
A symmetric hyperbolic system $\oS$ is said \textbf{of constant characteristic} if $\dim\ker \sigma_\oS(\n^\flat)$ is constant, where $\sigma_{\oS}(\n^\flat)\in\End(T^*\M|_{\partial\M})$.
In particular, if $\sigma_\oS(\n^\flat)$ has maximal rank at each point of $\bM$ we say that $\oS$ is \textbf{nowhere characteristic}.

Concerning boundary conditions for a symmetric hyperbolic system $\oS$ with constant characteristic we quote from \cite[Definition 2.13]{GinouxMurro2022}.
A smooth subbundle $\oB$ of $\E_{|_\bM}$ is called a \textbf{self-adjoint admissible boundary condition} for $\oS$ if
\begin{enumerate}[(i)]
	\item\label{Item: bc condition - vanishing of principal symbol}
	the quadratic form $\Psi\mapsto\fiber{\sigma_{\oS}(\nu)\Psi}{\Psi}_p$ vanishes on $\oB$ ---here $\nu\in\Omega^1(\M)$ is any form such that $\ker\nu_x=T_x\partial\M$ for all $x\in\partial\M$;
	\item\label{Item: bc condition - rank of principal symbol}
	the rank of $\oB$ is equal to the number of pointwise non-negative eigenvalues of $\sigma_{\oS}(\nu)$ counting multiplicity;
	\item\label{Item: bc condition - self-adjointness}
	the identity $\oB=\oB^\dagger$ holds, where $\oB^\dagger:=[\sigma_{\oS}(\n^\flat)\mathsf{B}]^\perp$ and the symbol $(\cdot)^\perp$ denotes the pointwise orthogonal complement with respect to $\fiber{\cdot}{\cdot}$.
\end{enumerate}

The next Proposition shows that Equations \eqref{Eq: E dynamical equation}-\eqref{Eq: B dynamical equation} can be interpreted as a symmetric hyperbolic system of constant characteristic.
Moreover, the boundary condition \eqref{Eq: B boundary condition} is a self-adjoint boundary condition for that  symmetric hyperbolic system.

\begin{proposition}\label{Prop:symmetric hyperbolic system equivalent to Maxwell}
	Let $\E=\V^{n-k}\oplus \V^k\to \M$ be the vector bundle over $\M$ with the standard positive-definite fiber metric $\fiber{\cdot}{\cdot}$ between forms.
	Actually for $F_B,F_B'\in\Gamma(\V^{k})$ we have
	\begin{align*}
		\fiber{F_B}{F_B'}
		:=\ast_{h_t}[F_B\wedge\ast_{h_t} F_B']
		=-\ast_g[F_B\wedge\ast_g F_B']\,.
	\end{align*}
	Then:
	\begin{enumerate}
		\item\label{Item: symmetric hyperbolic system for Maxwell}
		The first-order differential operator $\oS\colon\Gamma(\E)\to\Gamma(\E)$ defined by 
		\begin{align}\label{def:S}
			\oS\left[\begin{matrix}F_E\\F_B \end{matrix}\right]
			&=\left(\begin{matrix}
				\beta^{-1}\mathcal{L}_{\partial_t}\circ\beta^{-1}
				&(-1)^{(n-k+1)(k+1)+1}\beta^{-1}\mathrm{d}_\Sigma\ast_{h_t}\beta
				\\
				-\mathrm{d}_\Sigma\ast_{h_t}&\mathcal{L}_{\partial_t}
			\end{matrix}\right)\left[\begin{matrix}F_E\\F_B \end{matrix}\right]\,,
		\end{align}
		is a symmetric hyperbolic system of constant characteristic.
		\item\label{Item: admissibile boundary condition for Maxwell}
		The subbundle $\mathsf{B}\subset\E|_{\partial\M}$ defined by
		\begin{align}\label{Eq: subbundle for admissible boundary condition for Maxwell}
			\mathsf{B}:=\{(F_E,F_B)\in\E|_{\partial\M}\,|\,\n\lrcorner F_B=0\}
			:=\{(F_E,F_B)\in\E|_{\partial\M}\,|\,\nu\wedge\ast_{h_t}F_B=0\}\,,
		\end{align}
		defines a self-adjoint admissible boundary condition for $\oS$.
	\end{enumerate}
\end{proposition}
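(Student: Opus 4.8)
The plan is to reduce everything to pointwise linear algebra on the fibres of $\E=\V^{n-k}\oplus\V^{k}$, equipped with the positive-definite pairing $\fiber{\cdot}{\cdot}$, once the principal symbol of $\oS$ is known. Splitting a covector as $\xi=\xi_0\,\mathrm{d}t+\eta$ with $\eta\in T^*_p\Sigma$, the operator $\mathcal{L}_{\partial_t}$ contributes $\xi_0$ to the top-order part while $\mathrm{d}_\Sigma$ contributes $\eta\wedge\cdot$, and the scalar factors $\beta^{\pm1}$ and the isomorphism $\ast_{h_t}$ are of order zero. First I would therefore record
\[
\sigma_\oS(\xi)=
\begin{pmatrix}
\beta^{-2}\xi_0 & (-1)^{(n-k+1)(k+1)+1}\,\eta\wedge\ast_{h_t}\\
-\,\eta\wedge\ast_{h_t} & \xi_0
\end{pmatrix},
\]
the $\beta$ in the upper-right entry cancelling because it is scalar.

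For condition (S) the diagonal entries are scalar multiples of $\Id$, hence $\fiber{\cdot}{\cdot}$-self-adjoint, so only the off-diagonal blocks matter. I would check that $\sigma_{21}(\eta)=-\eta\wedge\ast_{h_t}$ is the $\fiber{\cdot}{\cdot}$-adjoint of $\sigma_{12}(\eta)$, using that $\eta\wedge\cdot$ is adjoint to the interior product $\eta^\sharp\lrcorner\cdot$ and invoking the Hodge-adjunction identities collected in Remark~\ref{Rmk: Hodge dual relations}. The sign constant $(-1)^{(n-k+1)(k+1)+1}$ is precisely what makes the two signs coincide, and verifying this parity identity modulo $2$ is the one genuinely fiddly computation of the first part. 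For (H) I would restrict to a future-directed timelike $\tau=\tau_0\,\mathrm{d}t+\eta$ and expand, using (S),
\[
\fiber{\sigma_\oS(\tau)\Psi}{\Psi}
=\beta^{-2}\tau_0\,\fiber{F_E}{F_E}
+\tau_0\,\fiber{F_B}{F_B}
+2\,\fiber{\sigma_{12}(\eta)F_B}{F_E}.
\]
Bounding the cross term by $2|\eta|_{h_t}\,|F_E|\,|F_B|$ (as $\ast_{h_t}$ is an isometry and $\eta\wedge\cdot$ has operator norm $|\eta|_{h_t}$, with $|\cdot|$ the $\fiber{\cdot}{\cdot}$-norm), positivity of the resulting quadratic form with matrix $\left(\begin{smallmatrix}\beta^{-2}\tau_0 & -|\eta|_{h_t}\\ -|\eta|_{h_t} & \tau_0\end{smallmatrix}\right)$ reduces to $\tau_0>0$ and $\beta^{-2}\tau_0^2>|\eta|_{h_t}^2$, i.e. exactly to $\tau$ being future-directed and timelike; this is the clean conceptual core. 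For constant characteristic I note that $\nu$, hence $\n^\flat=c_t\,\nu$, annihilates $\partial_t$ and is thus spatial, so $\sigma_\oS(\n^\flat)$ is block-antidiagonal with kernel $\ker(\nu\wedge\ast_{h_t}|_{\V^{n-k}})\oplus\ker(\nu\wedge\ast_{h_t}|_{\V^{k}})$; since $\ast_{h_t}$ is an isomorphism and the rank of $\nu\wedge\cdot$ on each $\Lambda^j T^*\Sigma$ is independent of the non-zero covector $\nu$ (Koszul exactness), this dimension is constant along $\bM$.

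For Part~2 I would verify (i)--(iii) for $\mathsf{B}$, all exploiting that $\sigma_\oS(\nu)$ is block-antidiagonal. Condition (i): on $\mathsf{B}$ one has $\nu\wedge\ast_{h_t}F_B=0$, hence $\sigma_{12}(\nu)F_B=0$ and $\fiber{\sigma_\oS(\nu)\Psi}{\Psi}=2\,\fiber{\sigma_{12}(\nu)F_B}{F_E}=0$. Condition (ii): writing $\sigma_\oS(\nu)=\left(\begin{smallmatrix}0&A^*\\A&0\end{smallmatrix}\right)$ with $A=-\nu\wedge\ast_{h_t}|_{\V^{n-k}}$, its non-zero eigenvalues are $\pm$ the singular values of $A$ and its kernel has dimension $\dim\ker A+\dim\ker A^*$, so the number of non-negative eigenvalues equals $\mathrm{rank}\,A+\dim\ker A+\dim\ker A^*=\dim\V^{n-k}+\dim\ker A^*$; this matches $\mathrm{rank}\,\mathsf{B}$, since in $\mathsf{B}$ the component $F_E$ is free while $F_B$ ranges over $\ker(\nu\wedge\ast_{h_t}|_{\V^k})=\ker A^*$. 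Condition (iii): I would compute $\sigma_\oS(\n^\flat)\,\mathsf{B}=\{0\}\oplus\mathrm{Im}(A)=\{0\}\oplus(\nu\wedge\V^{k-1})$ and, using that $\nu\wedge\cdot$ is $\fiber{\cdot}{\cdot}$-adjoint to $\n\lrcorner$ up to the positive factor $c_t$, conclude that its orthogonal complement is $\V^{n-k}\oplus\ker(\n\lrcorner|_{\V^k})=\mathsf{B}$, i.e. $\mathsf{B}^\dagger=\mathsf{B}$.

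The positivity step (H) is short once the symbol is in hand, so I expect the main obstacles to be bookkeeping rather than conceptual: pinning down the exact parity that fixes the off-diagonal sign in (S), and carrying out the eigenvalue count and orthogonal-complement identification in (ii)--(iii) with the correct identifications $\mathrm{Im}(A)=\nu\wedge\V^{k-1}$ and $\ker A^*=\ker(\nu\wedge\ast_{h_t}|_{\V^k})$.
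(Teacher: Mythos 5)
Your proposal is correct and follows essentially the same route as the paper: you compute the same block form of $\sigma_{\oS}(\xi)$, use the wedge/contraction adjunction together with the Hodge identities of Remark~\ref{Rmk: Hodge dual relations} to match the off-diagonal sign for (S), reduce (H) by a Cauchy--Schwarz estimate to the timelike condition $\|\xi_{\Sigma_t}\|_{h_t}^2<\beta^{-2}\xi(\partial_t)^2$, and exploit the block-antidiagonal structure of $\sigma_{\oS}(\nu)$ for the constant-characteristic claim and for conditions (i)--(iii). The only cosmetic difference is that you count the non-negative eigenvalues abstractly via the singular values of the off-diagonal block $A=-\nu\wedge\ast_{h_t}$, whereas the paper writes out the eigenbundle decomposition $\E|_{\bM}=\ker\sigma_{\oS}(\nu)\oplus\ker[\sigma_{\oS}(\nu)\pm1]$ with explicit binomial-coefficient ranks; both yield the same verification of condition (ii).
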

\begin{proof}
	\noindent
	\begin{description}
		\item[\boxed{\ref{Item: symmetric hyperbolic system for Maxwell}}]
		The principal symbol of $\oS$ at $\xi\in T_p^*\M$, $p\in\Sigma_t$, is given by
		\begin{align*}
			\sigma_{\oS}(\xi)=\left(
			\begin{matrix}
				\beta^{-2}\xi(\partial_t)\operatorname{Id}_{\V^{n-k}|_p}
				&(-1)^{(n-k+1)(k+1)+1}\xi_{\Sigma_t}\wedge\ast_{h_t}
				\\
				-\xi_{\Sigma_t}\wedge\ast_{h_t}&\xi(\partial_t)\operatorname{Id}_{\V^{k}|_p}
			\end{matrix}
			\right)\,,
		\end{align*}
		where $\xi_{\Sigma_t}:=\iota_{\Sigma_t}^*\xi$ being 
		$\iota_{\Sigma_t}\colon\Sigma_t\to \M$.
		By direct inspection we have, for all $F_E\in \V^{n-k}_p$, $F_B\in \V^k_p$, and $\xi\in T_p^*\M$,
		\begin{align*}
		\fiber{-\xi_{\Sigma_t}\wedge\ast_{h_t}F_E}{F_B}&=-\fiber{\ast_{h_t}F_E}{\xi_{\Sigma_t}^{\sharp_t}\lrcorner\,F_B}\\
		&=-(-1)^{(n-k)(k-1)}\fiber{F_E}{\ast_{h_t}(\xi_{\Sigma_t}^{\sharp_t}\lrcorner\,F_B)}\\
		&=-(-1)^{(n-k)(k-1)+k+1}\fiber{F_E}{\xi_{\Sigma_t}\wedge\ast_{h_t}F_B}\\
		&=(-1)^{(n-k+1)(k+1)+1}\fiber{F_E}{\xi_{\Sigma_t}\wedge\ast_{h_t}F_B}\,,
		\end{align*}

		which shows $\sigma_{\oS}(\xi)^\dagger=\sigma_{\oS}(\xi)$ and therefore that
		condition~(S) holds.\\
		Next we prove condition~(H).
		Let $\xi=\xi(\partial_t)dt+\xi_{\Sigma_t}\in T_p^*\M$ be any future-directed 
		timelike covector that is, $\|\xi_{\Sigma_t}\|_{h_t}^2<\beta^{-2}\xi(\partial_t)^2$ and 
		$\xi(\partial_t)>0$.
		For any $F_E\in\V^k_p$ and $F_B\in\V^{n-k}_p$ we have
		\begin{align*}
			\fiber{\sigma_{\oS}(\xi)(F_E,F_B)}{(F_E,F_B)}
			&=\beta^{-2}\xi(\partial_t)\fiber{F_E}{F_E}
			+\xi(\partial_t)\fiber{F_B}{F_B}
			\\&-2\fiber{\xi_{\Sigma_t}\wedge\ast_{h_t} F_E}{F_B}
			\\&\geq\beta^{-2}\xi(\partial_t)\fiber{F_E}{F_E}
			+\xi(\partial_t)\fiber{F_B}{F_B}
			\\&-2\|\xi_{\Sigma_t}\|_{h_t}
			\fiber{F_E}{F_E}^{1/2}
			\fiber{F_B}{F_B}^{1/2}
			\\&\geq\beta^{-2}\xi(\partial_t)\fiber{F_E}{F_E}
			+\xi(\partial_t)\fiber{F_B}{F_B}
			\\&-2\beta^{-1}\xi(\partial_t)\fiber{F_E}{F_E}^{1/2}
			\fiber{F_B}{F_B}^{1/2}
			\\&=\xi(\partial_t)\left[\beta^{-1}\fiber{F_E}{F_E}^{1/2}-\fiber{F_B}{F_B}^{1/2}\right]^2
			\geq 0\,.
		\end{align*}
		Moreover, if $\fiber{\sigma_{\oS}(\xi)(F_E,F_B)}{(F_E,F_B)}=0$ then the above inequalities implies
		\begin{align*}
			\|\xi_{\Sigma_t}\|_{h_t}\fiber{F_E}{F_E}\fiber{F_B}{F_B}
			=\beta^{-2}\xi(\partial_t)^2\fiber{F_E}{F_E}\fiber{F_B}{F_B}\,,
		\end{align*}
		which forces $F_E=0$ and $F_B=0$ due to the condition $\|\xi_{\Sigma_t}\|_{h_t}^2<\xi(\partial_t)^2\beta^{-2}$.
		This proves that $\sigma_{\oS}(\xi)$ is positive definite and therefore condition~(H) holds.
		
		Finally, since $\sigma_{\oS}(\nu)$ is given by
		\begin{align*}
			\sigma_{\oS}(\nu)=\left(
			\begin{matrix}
				0&(-1)^{k(n-k)}\nu\wedge\ast_{h_t}
				\\
				-\nu\wedge\ast_{h_t}&0
			\end{matrix}
			\right)\,,
		\end{align*}
		it follows that 
		\begin{eqnarray*}
		\ker\sigma_\oS(\nu)
			&=&\{
			(F_E,F_B)\in\V^{n-k}\oplus\V^k\,|\,
			\n\lrcorner F_E
			=0=\n\lrcorner F_B
			\}
			\\
			&=&\pi_2^*\Lambda^{n-k}T^*\partial\Sigma\oplus
			\pi_2^*\Lambda^kT^*\partial\Sigma\,,
		\end{eqnarray*}
which proves that $\oS$ is of constant characteristic.
		
		\item[\boxed{\ref{Item: admissibile boundary condition for Maxwell}}]
		We now prove that the subbundle $\mathsf{B}$ introduced in Equation \eqref{Eq: subbundle for admissible boundary condition for Maxwell} identifies a future admissible boundary condition for $\oS$.
		By direct inspection we have
		\begin{align*}
			\E_{|_{\partial\M}}=
			\ker\sigma_{\oS}(\nu)
			\oplus\ker[\sigma_{\oS}(\nu)+1]
			\oplus\ker[\sigma_{\oS}(\nu)-1]\,,
		\end{align*}
		where 
		\begin{align*}
			\ker\sigma_{\oS}(\nu)
			&\simeq\pi_2^*\Lambda^{n-k}T^*\partial\Sigma\oplus\pi_2^*\Lambda^kT^*\partial\Sigma\,,
			\\
			\ker[\sigma_{\oS}(\nu)-\varepsilon]
			&=\{(F_E,-\varepsilon\nu\wedge\ast_{h_t}F_E)\in \E_{|\partial\M}\,|\,
			\ast_{h_t}F_E\in\pi_2^*\Lambda^{k-1}T^*\partial\Sigma\}\,,
			\qquad\varepsilon\in\{1,-1\}\,.
		\end{align*}
		Notice $\dim\ker\sigma_{\oS}(\nu)={n-2\choose n-k}+{n-2\choose k}$, moreover, each eigenspace associated to $\varepsilon\in\{\pm1\}$ has pointwise rank ${n-2\choose k-1}$.
		Thus, an admissible boundary condition must have rank ${n-2\choose k}+{n-2\choose k-1}+{n-2\choose n-k}$ because of condition \eqref{Item: bc condition - rank of principal symbol}.
		But this is exactly the case for $\mathsf{B}$, whose dimension is ${n-1\choose k-1}+{n-2\choose k}$ so that condition \eqref{Item: bc condition - rank of principal symbol} is fulfilled.
		Moreover, for all $(F_E,F_B)\in\mathsf{B}$ it holds
		\begin{align*}
			\fiber{\sigma_{\oS}(\nu)(F_E,F_B)}{(F_E,F_B)}
			=-2\fiber{F_E}{\nu\wedge\ast_{h_t} F_B}=0\,.
		\end{align*}
		The latter equality implies condition \eqref{Item: bc condition - vanishing of principal symbol}.
		Finally, since $\oB=\V^{n-k}\oplus\pi_2^*\Lambda^k T^*\partial\Sigma$ and $\sigma_{\oS}(\nu)(\oB)=\{(0,-\nu\wedge\ast_{h_t} F_E)\,|\, F_E\in \V^{n-k}\}$ we have that $\oB^\dagger=\V^{n-k}\oplus\pi_2^*\Lambda^kT^*\partial\Sigma=\oB$ \textit{i.e.} condition \eqref{Item: bc condition - self-adjointness} is fulfilled.
	\end{description}
	This concludes our proof.
\end{proof}


\section{The Cauchy problem for the Faraday tensor}\label{sec:4}
We have finally all the ingredients to prove our main theorem.
\begin{proof}[Proof of Theorem~\ref{Thm: well-posedness for Faraday's tensor initial-value problem with boundary conditions}.]
On account of Lemma \ref{Lem: Maxwell's equation for electric and magnetic component} we may reduce our problem to the initial-value problem
\begin{subequations}\label{Eq: reduced dynamical Maxwell system}
	\begin{align}
		\oS(F_E,F_B)
		&=((-1)^{(n-k)(k+1)}\ast_{h_t} j_B,\ast_{h_t}\zeta_E)\,,
		\\
		(F_E,F_B)|_{\Sigma_0}
		&=(F_{0,E},F_{0,B})
		\\
		(F_E,F_B)|_{\partial\M}
		&\in\mathsf{B}
	\end{align}
\end{subequations}
subjected to the constraint equations
\begin{align}\label{Eq: reduced constraint Maxwell system}
	\mathrm{d}_\Sigma[\beta^{-1}F_E]=(-1)^{n-k}\beta^{-1}j_E\,,
	\qquad
	\mathrm{d}_\Sigma F_B=\zeta_B\,,
	\qquad
	\iota_{\partial\Sigma_t}^*F_E=0\,.
\end{align}
Here $F_{0,E,}, F_{0,B}$ denote the electric and magnetic component of the initial datum $F_0\in\Omega^k(\M)$.
Notice that the assumptions on the initial data $F_0$ implies
\begin{align*}
	(F_{0,E},F_{0,B})\in\mathsf{B}\,,
	\qquad
	\mathrm{d}_\Sigma[\beta^{-1}F_{0,E}]=0\,,
	\qquad
	\mathrm{d}_\Sigma F_{0,B}=0\,,
\end{align*}
Since $\oS$ is symmetric hyperbolic and $\mathsf{B}$ is an admissible self-adjoint boundary condition for $\oS$, we may apply \cite[Thm. 1.2]{GinouxMurro2022}.
Notice that the compatibility conditions mentioned therein ---\textit{cf.} \cite[Eq. (4.3)]{GinouxMurro2022}--- are automatically fulfilled on account of our assumption that $\operatorname{supp}(F_0)\cap\partial\M=\varnothing$ and $(\operatorname{supp}(\zeta)\cup\operatorname{supp}(j))\cap\Sigma_0=\varnothing$.
	
Then \cite[Thm. 1.2]{GinouxMurro2022} guarantees the existence of a unique solution $(F_E,F_B)\in\Gamma(\V^{n-k}\oplus\V^k)$ to \eqref{Eq: reduced dynamical Maxwell system}.
Moreover \cite[Prop. 3.3]{GinouxMurro2022} entails \eqref{Eq: finite speed of propagation} and thus $F\in\Omega_{sc}^k(\M)$, where $F=\mathrm{d}t\wedge\ast_{h_t}F_E+F_B$.
	
It remains to prove that \eqref{Eq: reduced constraint Maxwell system} holds ---notice that this would also prove that $F\in\Omega_{c,\n}^k(\M)$.
In fact by direct inspection we find
\begin{align*}
	\mathcal{L}_{\partial_t}\mathrm{d}_\Sigma[\beta^{-1}F_E]
	&=\mathrm{d}_\Sigma\mathcal{L}_{\partial_t}[\beta^{-1}F_E]
	\\&=(-1)^{(n-k+1)(k+1)}\cancel{\mathrm{d}_\Sigma^2[\ast_{h_t}\beta F_B]}
	+(-1)^{(n-k)(k+1)}\mathrm{d}_\Sigma[\beta\ast_{h_t} j_B]
	&\textrm{Eq. }\eqref{Eq: E dynamical equation}
	\\&=(-1)^{n-k}\mathcal{L}_{\partial_t}[\beta^{-1}j_E]
	&\textrm{Eq. }\eqref{Eq: j continuity equation}
	\\
	\mathcal{L}_{\partial_t}\mathrm{d}_\Sigma F_B
	&=\mathrm{d}_\Sigma\mathcal{L}_{\partial_t}F_B
	=
	\cancel{\mathrm{d}_\Sigma^2[\ast_{h_t}F_E]}
	+\mathrm{d}_\Sigma[\ast_{h_t}\zeta_E]
	=\mathcal{L}_{\partial_t}\zeta_B
	&\textrm{Eq. }\eqref{Eq: B dynamical equation}
	\\
	\mathcal{L}_{\partial_t}\iota_{\partial\Sigma}^*\beta^{-1}F_E
	&=\iota_{\partial\Sigma}^*\mathcal{L}_{\partial_t}[\beta^{-1}F_E]
	=0
	&\textrm{Eq. }\eqref{Eq: E dynamical equation}
	-\eqref{Eq: B boundary condition}\,,
\end{align*}
where in the last equality we also used that $\n\lrcorner j=0$ is equivalent to $\iota_{\bSigma}^*[\ast_{h_t}j_B]=0$.
The latter equations proves that \eqref{Eq: reduced constraint Maxwell system} is fulfilled once is fulfilled by the initial datum $F_0$: This is the case by assumption.
\end{proof}

\section{Existence of Green operators and pre-symplectic structures}\label{sec:5}
In this section we establish the existence of the Green operators for the differential operator $D=\delta+\mathrm{d}$ acting on $k$-forms and with boundary conditions \eqref{Eq: F normal-boundary condition}.
To this end, we will profit from \cite{BaerGinoux2011,Dappiaggi-Drago-Longhi-2020,GinouxMurro2022}.
For later convenience we recall that $\Omega^k_{sfc}(\M)$ (\textit{resp.} $\Omega^k_{spc}(\M)$) denotes the space of strictly future- (\textit{resp.} past-) compactly supported $k$-forms that is, of all $F\in\Omega^k(\M)$ such that $\operatorname{supp}(F)\subset J^-(K)$ (\textit{resp.} $\operatorname{supp}(F)\subset J^+(K)$) for a suitable compact subset $K\subset\M$.
We also set $\Omega^k_{sc}(\M):=\Omega^k_{sfc}(\M)\cup\Omega^k_{spc}(\M)$.
Similarly $\Omega^k_{fc}(\M)$ (\textit{resp.} $\Omega^k_{pc}(\M)$) denotes the space of future- (\textit{resp.} past-) compactly supported $k$-forms that is, of all $F\in\Omega^k(\M)$ such that $\operatorname{supp}(F)\cap J^+(x)$ (\textit{resp.} $\operatorname{supp}(F)\cap J^-(x)$) is compact for all $x\in\M$.
We set $\Omega^k_{tc}(\M):=\Omega^k_{fc}(\M)\cup\Omega^k_{pc}(\M)$.

\begin{proposition}\label{Prop: advanced, retarded propagators}
	Let $k\in\{0,\ldots,n\}$ and let $D\colon\Omega^k(\M)\to\Omega^{k-1}(\M)\oplus\Omega^{k+1}(\M)$ be the differential operator $D\omega:=\delta\omega+\mathrm{d}\omega$.
	There exists linear operators
	\begin{align*}
		G^+_k\colon\Omega^{k-1}_{c,\n,\delta}(\M)\oplus\Omega^{k+1}_{c,\mathrm{d}}(\M)\to\Omega^k_{spc,\n}(\M)\,,
		\qquad
		G^-_k\colon\Omega^{k-1}_{c,\n,\delta}(\M)\oplus\Omega^{k+1}_{c,\mathrm{d}}(\M)\to\Omega^k_{sfc,\n}(\M)\,,
	\end{align*}
	which fulfil the following properties:
	\begin{align}
		\label{Eq: advanced, retarded propagators - left inverse property with d}
		\mathrm{d}G^\pm_k(\alpha_{k-1}\oplus\zeta_{k+1})
		&=\zeta_{k+1}
		\\
		\label{Eq: advanced, retarded propagators - left inverse property with delta}
		\delta G^\pm_k(\alpha_{k-1}\oplus\zeta_{k+1})
		&=\alpha_{k-1}
		\\
		\label{Eq: advanced, retarded propagators - right inverse property}
		G^\pm_k(\delta\omega_k\oplus\mathrm{d}\omega_k)
		&=\omega_k
		\qquad\forall\omega_k\in\Omega_{c,\n}^k(\M)
		\\
		\label{Eq: advanced, retarded propagators - support property}
		\operatorname{supp}G^\pm_k(\alpha_{k-1}\oplus\zeta_{k+1})
		&\subseteq J^\pm[\operatorname{supp}(\alpha_{k-1})\cup\operatorname{\supp}(\zeta_{k+1})]\,.
	\end{align}
	Moreover, the $G_k^\pm$ can be extended to
	\begin{align}\label{Eq: advanced, retarded propagators - extension of domains}
		G_k^+\colon
		\Omega_{spc,\n,\delta}^{k-1}(\M)
		\oplus\Omega_{spc,\mathrm{d}}^{k+1}(\M)
		\to\Omega_{spc,\n}^k(\M)\,,
		\quad
		G_k^-\colon
		\Omega_{sfc,\n,\delta}^{k-1}(\M)
		\oplus\Omega_{sfc,\mathrm{d}}^{k+1}(\M)
		\to\Omega_{sfc,\n}^k(\M)\,,
	\end{align}	
	still preserving properties \eqref{Eq: advanced, retarded propagators - left inverse property with d}-\eqref{Eq: advanced, retarded propagators - support property}.
	
	Finally, if $G_k:=G_k^+-G_k^-$, then there exists a short exact sequence
	\begin{align}\label{Eq: short exact sequence}
		\{0\}\to\Omega^k_{c,\n}(\M)
		\stackrel{D}{\longrightarrow}
		\Omega^{k-1}_{c,\n,\delta}(\M)
		\oplus\Omega^{k+1}_{c,\mathrm{d}}(\M)
		\stackrel{G_k}{\longrightarrow}
		\Omega^k_{sc,\n}(\M)
		\stackrel{D}{\longrightarrow}
		\delta\Omega^k_{sc,\n}(\M)
		\oplus\mathrm{d}\Omega^k_{sc}(\M)
		\to\{0\}\,.
	\end{align}
\end{proposition}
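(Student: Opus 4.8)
The plan is to \emph{define} $G_k^\pm$ through the Cauchy problem of Theorem~\ref{Thm: well-posedness for Faraday's tensor initial-value problem with boundary conditions} and then to read off every stated property from uniqueness and from finite speed of propagation. Given $\alpha_{k-1}\oplus\zeta_{k+1}\in\Omega_{c,\n,\delta}^{k-1}(\M)\oplus\Omega_{c,\mathrm{d}}^{k+1}(\M)$, I would choose a Cauchy slice $\Sigma_{t_0}$ lying entirely to the past (\emph{resp.} future) of $\operatorname{supp}(\alpha_{k-1})\cup\operatorname{supp}(\zeta_{k+1})$ and let $G_k^+(\alpha_{k-1}\oplus\zeta_{k+1})$ (\emph{resp.} $G_k^-$) be the unique solution $F\in\Omega_{sc,\n}^k(\M)$ of \eqref{Eq: Faraday's tensor initial-value problem with boundary conditions} with $j=\alpha_{k-1}$, $\zeta=\zeta_{k+1}$ and $F_0=0$. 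The hypotheses of the theorem are vacuous for $F_0=0$, so existence and uniqueness hold; independence of $\Sigma_{t_0}$ follows because two such solutions share a common slice further in the past on which both vanish, whence they agree by uniqueness. Properties \eqref{Eq: advanced, retarded propagators - left inverse property with d}--\eqref{Eq: advanced, retarded propagators - left inverse property with delta} are built into the construction, and the \emph{one-sided} support estimate \eqref{Eq: advanced, retarded propagators - support property} follows from \cite[Prop.~3.3]{GinouxMurro2022}: with vanishing data and source supported to the future of $\Sigma_{t_0}$, the solution lives in $J^+[\operatorname{supp}(\alpha_{k-1})\cup\operatorname{supp}(\zeta_{k+1})]$, so $F\in\Omega_{spc,\n}^k(\M)$.

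For the right-inverse property \eqref{Eq: advanced, retarded propagators - right inverse property} I first record that $D$ does map $\Omega^k_{c,\n}(\M)$ into the asserted space: $\delta\omega,\mathrm{d}\omega$ are compactly supported, $\delta^2\omega=0$, $\mathrm{d}^2\omega=0$, and the identity $\delta^{\partial\M}\n\lrcorner=-\n\lrcorner\delta$ of Remark~\ref{Rmk: Hodge dual relations} gives $\n\lrcorner\delta\omega=-\delta^{\partial\M}(\n\lrcorner\omega)=0$. Choosing $\Sigma_{t_0}$ to the past of $\operatorname{supp}(\omega)$, the form $\omega$ itself solves \eqref{Eq: Faraday's tensor initial-value problem with boundary conditions} with source $\delta\omega\oplus\mathrm{d}\omega$ and zero datum on $\Sigma_{t_0}$, so uniqueness forces $G_k^+(\delta\omega\oplus\mathrm{d}\omega)=\omega$ (and symmetrically for $G_k^-$). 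In particular $D$ is injective on $\Omega^k_{c,\n}(\M)$, since $D\omega=0$ gives $\omega=G_k^+(0)=0$.

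The extension \eqref{Eq: advanced, retarded propagators - extension of domains} is where I expect the main obstacle. A naive partition of unity fails, because cutting off a co-closed $\alpha$ or a closed $\zeta$ destroys the constraints $\delta\alpha=0$, $\mathrm{d}\zeta=0$ required to invoke the theorem. Instead I would again use the Cauchy problem: for $\alpha_{k-1}\oplus\zeta_{k+1}$ supported in $J^+(K)$, pick $\Sigma_{t_0}$ to the past of the compact $K$ and solve the symmetric hyperbolic system of Proposition~\ref{Prop:symmetric hyperbolic system equivalent to Maxwell} with zero data on $\Sigma_{t_0}$ and the now merely future-bounded smooth source, relying on \cite[Thm.~1.2, Prop.~3.3]{GinouxMurro2022}. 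That the resulting $F$ solves $\delta F=\alpha_{k-1}$ and $\mathrm{d}F=\zeta_{k+1}$---and not only the dynamical equations---follows from the constraint-propagation argument of the proof of Theorem~\ref{Thm: well-posedness for Faraday's tensor initial-value problem with boundary conditions}: the continuity equations still hold and the constraints are satisfied on $\Sigma_{t_0}$, hence everywhere. The support estimate yields $F\in\Omega_{spc,\n}^k(\M)$ and all of \eqref{Eq: advanced, retarded propagators - left inverse property with d}--\eqref{Eq: advanced, retarded propagators - support property} persist on the extended domain, the right-inverse property now holding for $\omega\in\Omega_{spc,\n}^k(\M)$ (\emph{resp.} $\Omega_{sfc,\n}^k(\M)$).

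Finally I would establish \eqref{Eq: short exact sequence} spot by spot. Injectivity of the first $D$ is above, and $G_k\circ D=0$ is immediate from \eqref{Eq: advanced, retarded propagators - right inverse property}. If $G_k(\alpha_{k-1}\oplus\zeta_{k+1})=0$ then $F:=G_k^+(\alpha_{k-1}\oplus\zeta_{k+1})=G_k^-(\alpha_{k-1}\oplus\zeta_{k+1})$ has support in $J^+[\,\cdot\,]\cap J^-[\,\cdot\,]$, hence compact, and $DF=\alpha_{k-1}\oplus\zeta_{k+1}$, which gives exactness at the second spot. For the last two spots the device is a temporal step function $\chi$, equal to $1$ in the far future and $0$ in the far past with transition in a compact time band; being a scalar it commutes with $\n\lrcorner$, so that for any $\eta$ with $\n\lrcorner\eta=0$ the forms $\delta(\chi\eta),\mathrm{d}(\chi\eta)$ stay co-closed \emph{resp.} closed with $\n\lrcorner\delta(\chi\eta)=0$, and the splitting preserves all constraints. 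For $F\in\Omega_{sc,\n}^k(\M)$ with $DF=0$, the form $D(\chi F)=[D,\chi]F$ is compactly supported and lies in $\Omega_{c,\n,\delta}^{k-1}(\M)\oplus\Omega_{c,\mathrm{d}}^{k+1}(\M)$; the right-inverse property applied to $\chi F\in\Omega_{spc,\n}^k(\M)$ and to $(1-\chi)F\in\Omega_{sfc,\n}^k(\M)$ gives $G_k^+(D(\chi F))=\chi F$ and $G_k^-(D(\chi F))=(\chi-1)F$, so $G_k(D(\chi F))=F$ and exactness at $\Omega_{sc,\n}^k(\M)$ holds. For surjectivity of the last $D$, given $(\delta F_1,\mathrm{d}F_2)$ with $F_1\in\Omega_{sc,\n}^k(\M)$ and $F_2\in\Omega_{sc}^k(\M)$, I split each source into strictly past- and future-compact constrained pieces via $\chi$ and set $F:=G_k^+(\delta(\chi F_1)\oplus\mathrm{d}(\chi F_2))+G_k^-(\delta((1-\chi)F_1)\oplus\mathrm{d}((1-\chi)F_2))\in\Omega_{sc,\n}^k(\M)$, which satisfies $DF=(\delta F_1,\mathrm{d}F_2)$ by the left-inverse property.
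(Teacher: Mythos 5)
Your proof follows the paper's own route in all but one step: the definition of $G_k^\pm$ via the Cauchy problem of Theorem \ref{Thm: well-posedness for Faraday's tensor initial-value problem with boundary conditions} with vanishing data on a slice past/future of the sources, the uniqueness argument for \eqref{Eq: advanced, retarded propagators - right inverse property} (including the needed check that $\n\lrcorner\,\delta\omega=-\delta^{\partial\M}(\n\lrcorner\,\omega)=0$, so that $D$ lands in the stated domain), and the four-spot verification of \eqref{Eq: short exact sequence} via a temporal step function $\chi$ are all exactly the paper's arguments. The genuine divergence is the extension \eqref{Eq: advanced, retarded propagators - extension of domains}. The paper mimics \cite[Thm. 3.8]{Baer_2015}: it sets $G_k^+(\alpha\oplus\zeta)|_x:=G_k^+(\chi\alpha\oplus\chi\zeta)|_x$ with $\chi\in C_c^\infty(\M)$ equal to $1$ on the compact set $J^-(x)\cap\operatorname{supp}(\alpha\oplus\zeta)$, and the objection you raise ---that $\chi\alpha$, $\chi\zeta$ are no longer coclosed resp. closed--- is addressed there only implicitly, by noting that $\delta(\chi\alpha)$ and $\mathrm{d}(\chi\zeta)$ are supported away from $J^-(x)$ and hence cannot affect the value at $x$. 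Your alternative ---solving the symmetric hyperbolic system of Proposition \ref{Prop:symmetric hyperbolic system equivalent to Maxwell} directly for the strictly past-compact (hence spacelike-compact) source and rerunning the constraint-propagation argument--- keeps the constraints exact throughout and never applies $G_k^+$ outside its stated domain, at the price of invoking the well-posedness of \cite{GinouxMurro2022} for sources that are merely spacelike compact rather than compactly supported (true by finite propagation speed, but worth saying). The only tacit point is the inclusion $\operatorname{im}G_k\subseteq\ker D$ at the third spot, which is immediate from \eqref{Eq: advanced, retarded propagators - left inverse property with d}--\eqref{Eq: advanced, retarded propagators - left inverse property with delta}.
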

\begin{proof}
	Let $k\in\{0,\ldots, n\}$.
	Following \cite{BaerGinoux2011,Dappiaggi-Drago-Longhi-2020,Dappiaggi-Lang-2012,Dimock-92,GinouxMurro2022}, we define $G^+_k\colon\Omega^{k-1}_{c,\n,\delta}(\M)\oplus\Omega^{k+1}_{c,\mathrm{d}}(\M)\to\Omega^k_{spc,\n}(\M)$ so that $G_k^+(\alpha_{k-1}\oplus\zeta_{k+1})$ is the unique solution $\omega_k\in\Omega^k(\M)$ to the initial-value problem with boundary conditions
	\begin{align}\label{Eq: advanced, retarded propagators - initial-value problem}
		\mathrm{d}\omega_k=\zeta_{k+1}\,,
		\qquad
		\delta \omega_k=\alpha_{k-1}\,,
		\qquad
		\n\lrcorner \omega_k=0\,,
		\qquad
		\omega_k|_\Sigma=0\,,
	\end{align}
	where $\Sigma$ is an arbitrary but fixed Cauchy surface such that $J^-(\Sigma)\cap[\operatorname{supp}(\alpha_{k-1})\cup\operatorname{\supp}(\zeta_{k+1})]=\varnothing$.
	Existence and uniqueness of $G_k^+(\alpha_{k-1}\oplus\zeta_{k+1})$ follows from Theorem \ref{Thm: well-posedness for Faraday's tensor initial-value problem with boundary conditions}, moreover, $G_k^+$ is easily shown to be linear and independent on the chosen $\Sigma$.
	The map $G^-_k$ is similarly defined by assigning vanishing Cauchy data on a Cauchy surface $\Sigma$ so that $J^+(\Sigma)\cap[\operatorname{supp}(\alpha_{k-1})\cup\operatorname{\supp}(\zeta_{k+1})]=\varnothing$.
	
	Equations \eqref{Eq: advanced, retarded propagators - left inverse property with d}-\eqref{Eq: advanced, retarded propagators - left inverse property with delta} follow from the definition of $G_k^\pm\alpha$ while the inclusion \eqref{Eq: advanced, retarded propagators - support property} is a consequence of \eqref{Eq: finite speed of propagation}.
	Finally, Equation \eqref{Eq: advanced, retarded propagators - right inverse property} follows from the uniqueness of \eqref{Eq: advanced, retarded propagators - initial-value problem} together with the condition $\n\lrcorner\omega_k=0$.
	Notice that the latter condition is necessary for \eqref{Eq: advanced, retarded propagators - right inverse property} as the latter equation implies $\n\lrcorner\omega_k=\n\lrcorner G_k^\pm(\delta\omega_k\oplus\mathrm{d}\omega_k)=0$.
	
	The extension \eqref{Eq: advanced, retarded propagators - extension of domains} is obtained by using property \eqref{Eq: advanced, retarded propagators - support property}, \textit{cf.} \cite[Thm. 3.8]{Baer_2015} whose proof we mimic for the sake of self-containedness of the article.
	To wit, let $\alpha_{k-1}\in\Omega_{spc,\n,\delta}^{k-1}(\M)$ and $\zeta_{k+1}\in\Omega_{spc,\mathrm{d}}^{k+1}(\M)$.
	We define $G_k^+(\alpha_{k-1}\oplus\zeta_{k+1})$ as follows ---a similar argument goes for $G_k^-$.
For fixed $x\in\M$, let $K_x:=J^-(x)\cap[\operatorname{\supp}(\alpha_{k-1}\oplus\zeta_{k+1})]$.
	Then $K_x$ is compact and we may choose $\chi\in C^\infty_c(\M)$ such that $\chi|_{K_x}=1$.
For any such $\chi$ we set
	\begin{align}\label{Eq: advanced, retarded propagator - extension of domain definition}
		G_k^+(\alpha_{k-1}\oplus\zeta_{k+1})|_x
		:=G_k^+(\chi\alpha_{k-1}\oplus\chi\zeta_{k+1})|_x\,.
	\end{align}
Note that $\operatorname{\supp}(\chi)$ being compact ensures that $\chi\alpha_{k-1}$ and $\chi\zeta_{k+1}$ are compactly supported.
	Moreover, $\operatorname{\supp}(\mathrm{d}[\chi\zeta_{k+1}])\cap J^-(x)=\varnothing$ and similarly $\operatorname{supp}(\delta[\chi\alpha_{k-1}])\cap J^-(x)=\varnothing$.
	On account of property \eqref{Eq: advanced, retarded propagators - support property} this entails that $G_k^+(\chi\alpha_{k-1}\oplus\chi\zeta_{k+1})|_x$ is well-posed and defines the wanted extension.
	
	The resulting map $G_k^+$ is independent on the particular choice of $\chi$.
	Indeed, any pair of functions $\chi,\chi'$ with the above properties fulfil $\operatorname{supp}[(\chi-\chi')(\alpha_{k-1}\oplus\zeta_{k+1})]\cap J^-(x)=\varnothing$, therefore, $G_k^+[\chi\alpha_{k-1}\oplus\chi\zeta_{k+1}]|_x=G_k^+[\chi'\alpha_{k-1}\oplus\chi'\zeta_{k+1}]|_x$.
	
	The $\chi$-independence implies linearity of the resulting map $G_k^+$.
	Indeed, if $\alpha_{k-1}\oplus\zeta_{k+1}$, $\alpha_{k-1}'\oplus\zeta_{k+1}'$ are in $\Omega_{spc,\n,\delta}^{k-1}(\M)\oplus\Omega_{spc,\mathrm{d}}^{k+1}(\M)$, then for all $x\in\M$ we may choose $\chi\in C^\infty(\M)$ so that $\chi=1$ on $J^-(x)\cap[\operatorname{supp}(\alpha_{k-1}\oplus\zeta_{k+1})\cup\operatorname{supp}(\alpha_{k-1}'\oplus\zeta_{k+1}')]$, thus
	\begin{align*}
		G_k^+[(\alpha_{k-1}+\alpha_{k-1}')\oplus(\zeta_{k+1}+\zeta_{k+1}')]|_x
		&=G_k^+[(\chi\alpha_{k-1}+\chi\alpha_{k-1}')\oplus(\chi\zeta_{k+1}+\chi\zeta_{k+1}')]|_x
		\\
		&=G_k^+[\chi\alpha_{k-1}\oplus\chi\zeta_{k+1}]|_x
		+G_k^+[\chi\alpha_{k-1}'\oplus\chi\zeta_{k+1}']|_x
		\\
		&=G_k^+[\alpha_{k-1}\oplus\zeta_{k+1}]|_x
		+G_k^+[\alpha_{k-1}'\oplus\zeta_{k+1}']|_x\,.
	\end{align*}
	Property \eqref{Eq: advanced, retarded propagators - support property} follows from Equation \eqref{Eq: advanced, retarded propagator - extension of domain definition}.
The same holds for properties \eqref{Eq: advanced, retarded propagators - left inverse property with delta}-\eqref{Eq: advanced, retarded propagators - left inverse property with d}.
Note also that, because it is of vanishing order, the boundary condition $\n\lrcorner G_k^+(\alpha_{k-1}\oplus\zeta_{k+1})=0$ is also a straightforward consequence of the definition of $G_k^+$.
For what concerns \eqref{Eq: advanced, retarded propagators - right inverse property} we observe that, for all $\omega_k\in\Omega^k_{spc,\n}(\M)$ it holds
	\begin{align*}
		G_k^+(\delta\omega_k\oplus\mathrm{d}\omega_k)|_x
		=G_k^+(\chi\delta\omega_k\oplus\chi\mathrm{d}\omega_k)|_x
		=G_k^+(\delta\chi\omega_k\oplus\mathrm{d}\chi\omega_k)|_x
		=\chi\omega_k|_x
		=\omega_k|_x\,,
	\end{align*}
	where we used $\operatorname{supp}(\mathrm{d}\chi)\cap\operatorname{supp}(\alpha_{k-1}\oplus\zeta_{k+1})\cap J^-(x)=\varnothing$.
	
	We now prove the exactness of \eqref{Eq: short exact sequence}.
	To begin with, notice that if $\alpha_k\in\Omega_{c,\n}^k(\M)$ is such that $D\alpha_k=0$ ---\textit{i.e.} $\delta\alpha_k=0$ and $\mathrm{d}\alpha_k=0$--- then we have $\alpha_k=G_k^+(\delta\alpha_k,\mathrm{d}\alpha_k)=0$: This shows exactness in the first arrow of \eqref{Eq: short exact sequence}.
	
	If $\alpha_k\in\Omega^k_{c,\n}(\M)$ then $G_kD\alpha_k=G_k^+(\delta\alpha_k,\mathrm{d}\alpha_k)-G_k^-(\delta\alpha_k,\mathrm{d}\alpha_k)=\alpha_k-\alpha_k=0$, proving that $D\Omega_{c,\n}^k(\M)\subset\ker G_k$.
	Conversely, if $\alpha_{k-1}\oplus\zeta_{k+1}\in\Omega_{c,\n,\delta}^{k-1}(\M)\oplus\Omega^k_{c,\mathrm{d}}(\M)$ is such that $G_k(\alpha_{k-1}\oplus\zeta_{k+1})=0$ then $G_k^+(\alpha_{k-1}\oplus\zeta_{k+1})=G_k^-(\alpha_{k-1}\oplus\zeta_{k+1})\in\Omega_{c,\n}^k(\M)$ is such that
	\begin{align*}
		DG_k^+(\alpha_{k-1}\oplus\zeta_{k+1})
		=\delta G_k^+(\alpha_{k-1}\oplus\zeta_{k+1})
		+\mathrm{d}G_k^+(\alpha_{k-1}\oplus\zeta_{k+1})
		=\alpha_{k-1}\oplus\zeta_{k+1}\,.
	\end{align*}
	This proves exactness of \eqref{Eq: short exact sequence} in the second arrow.
	
	Let $\alpha_{k-1}\oplus\zeta_{k+1}\in\Omega_{c,\n,\delta}^{k-1}(\M)\oplus\Omega^{k+1}_{c,\mathrm{d}}(\M)$: Then	$\delta G_k(\alpha_{k-1}\oplus\zeta_{k+1})=\delta G_k^+(\alpha_{k-1}\oplus\zeta_{k+1})-\delta G_k^-(\alpha_{k-1}\oplus\zeta_{k+1})=\alpha_{k-1}-\alpha_{k-1}=0$, and similarly $\mathrm{d}G_k(\alpha_{k-1}\oplus\zeta_{k+1})=0$.
	This shows that $DG_k(\alpha_{k-1}\oplus\zeta_{k+1})=0$ and thus $G_k[\Omega_{c,\n,\delta}^{k-1}(\M)\oplus\Omega^{k+1}_{c,\mathrm{d}}(\M)]\subset\ker D$.
	Moreover, let $\omega_k\in\Omega_{sc,\n}^k(\M)$ be such that $D\omega_k=0$.
	Consider a function $\chi\in C^\infty(\M)$ such that $\mathrm{d}\chi\in\operatorname{span}\mathrm{d}t$ and such that $\chi(t)=1$ for $t\geq t_0$, $t_0\in\mathbb{R}$ being arbitrary, and $\chi(t)=0$ for $t\leq -t_0$.
	Let $\omega_k^+:=\chi\omega_k$ and $\omega_k^-:=(1-\chi)\omega_k$.
	Then $\omega_k^+\in\Omega_{spc,\n}^k(\M)$ and $\omega_k^-\in\Omega_{sfc,\n}^k(\M)$.
	Moreover, $\delta\omega_k^+=-\delta\omega_k^-\in\Omega_{c,\n}^{k-1}(\M)$ and similarly $\mathrm{d}\omega_k^\pm\in\Omega_c^{k+1}(\M)$.
	Finally
	\begin{align*}
		G_k(\delta\omega_k^+\oplus\mathrm{d}\omega_k^+)
		&=G_k^+(\delta\omega_k^+\oplus\mathrm{d}\omega_k^+)
		-G_k^-(\delta\omega_k^+\oplus\mathrm{d}\omega_k^+)
		\\
		&=G_k^+(\delta\omega_k^+\oplus\mathrm{d}\omega_k^+)
		+G_k^-(\delta\omega_k^-\oplus\mathrm{d}\omega_k^-)\\
		&=\omega_k^+
		+\omega_k^-\\
		&=\omega_k\,,
	\end{align*}
	where we used the extension \eqref{Eq: advanced, retarded propagators - extension of domains}.
	This shows exactness in the third arrow of \eqref{Eq: short exact sequence}.
	
	Finally, let $\alpha_k\in\Omega_{sc,\n}^k(\M)$ and $\beta_k\in\Omega_{sc}^k(\M)$.
	We wish to prove the existence of $\omega_k\in\Omega_{sc,\n}^k(\M)$ such that $D\omega_k=\delta\alpha_k\oplus\mathrm{d}\beta_k$, that is, $\delta\omega_k=\delta\alpha_k$ and $\mathrm{d}\omega_k=\mathrm{d}\beta_k$.
	To this avail, we consider $\chi\in C^\infty(\M)$ as above and let $\alpha_k=\alpha^+_k+\alpha^-_k$, where $\alpha^+_k:=\chi\alpha_{k-1}$ and $\alpha^-_k:=(1-\chi)\alpha^-_k$ and similarly $\beta_k=\beta_k^++\beta_k^-$.
	Notice that, per construction $\alpha_k^+\in\Omega_{spc,\n}^+(\M)$, $\alpha_k^-\in\Omega_{sfc,\n}^k(\M)$ and similarly  $\beta_k^+\in\Omega_{spc}^k(\M)$ and $\beta_k^-\in\Omega_{sfc}^k(\M)$.
	We then set $\omega_k:=G_k^+(\delta\alpha_k^+\oplus\mathrm{d}\beta_k^+)+G^-_k(\delta\alpha_k^-\oplus\mathrm{d}\beta_k^-)$.
	Per definition $\omega_k\in\Omega_{sc,\n}^k(\M)$, moreover, $D\omega_k=\delta\alpha_k^+\oplus\mathrm{d}\beta_k^++\delta\alpha_k^-\oplus\mathrm{d}\beta_k^-=\delta\alpha_k\oplus\mathrm{d}\beta_k$, where we used the extension \eqref{Eq: advanced, retarded propagators - extension of domains}.
	This shows exactness of \eqref{Eq: short exact sequence} in the fourth and last arrow.
\end{proof}

\begin{remark}
	From \eqref{Eq: advanced, retarded propagators - extension of domains} it follows that the causal propagator $G_k$ extends to a linear map $G_k\colon\Omega_{tc,\n,\delta}^{k-1}(\M)\oplus\Omega_{tc,\mathrm{d}}^{k+1}(\M)\to\Omega_{\n}^k(\M)$, \textit{cf.} \cite[Thm. 3.8]{Baer_2015}.
	Furthermore, one may generalize the exact sequence \eqref{Eq: short exact sequence} by relaxing the compactness support assumption to timelike compactness, while dropping the spacelike compactness condition:
	\begin{align}\label{Eq: generalized short exact sequence}
		\{0\}\to\Omega^k_{tc,\n}(\M)
		\stackrel{D}{\longrightarrow}
		\Omega^{k-1}_{tc,\n,\delta}(\M)
		\oplus\Omega^{k+1}_{tc,\mathrm{d}}(\M)
		\stackrel{G_k}{\longrightarrow}
		\Omega^k_{\n}(\M)
		\stackrel{D}{\longrightarrow}
		\delta\Omega^k_{\n}(\M)
		\oplus\mathrm{d}\Omega^k(\M)
		\to\{0\}\,.
	\end{align}
\end{remark}

The exactness of \eqref{Eq: short exact sequence} leads to the following isomorphism, which provides a complete description of the solution space to Maxwell's equations by generalizing the well-known situation on a globally hyperbolic spacetime without boundary:
\begin{align}\label{Eq: solution space isomorphism}
	\operatorname{Sol}_{sc,\n}^k(\M)
	&:=\{F_k\in\Omega_{sc}^k(\M)\,|\,\delta F_k=0\,,\,\mathrm{d}F_k=0\,,\,\n\lrcorner F_k=0\}
	\\
	&\simeq G_k[\Omega_{c,\n,\delta}^{k-1}(\M)\oplus \Omega_{c,\mathrm{d}}^{k+1}(\M)]
	\simeq\frac{\Omega_{c,\n,\delta}^{k-1}(\M)\oplus\Omega_{c,\mathrm{d}}^{k+1}(\M)}{D\Omega_{c,\n}^k(\M)}\,.
\end{align}

\subsection{Causal propagator and the pre-symplectic structure}
We conclude the paper by endowing the space of homogeneous solutions to the Faraday Cauchy problem with a pre-symplectic form.
The latter is constructed out of the causal propagators $\{G_k\}_{k=1}^n$ introduced in Proposition \ref{Prop: advanced, retarded propagators}.
The resulting pre-symplectic structure requires to consider all $k$-forms at once in a non-trivial fashion.
To this avail we set $\Omega^\oplus(\M):=\oplus_{k=0}^n\Omega^k(\M)$: An element of this latter space will be denoted by $\underline{F}=\sum_{k=0}^nF_k$, $F_k\in\Omega^k(\M)$.
The natural pairing $\Omega^\oplus(\M)^2\to\mathbb{R}$ inherited from the pairings on $\Omega^k(\M)$ is denoted by $(\cdot\,,\cdot)_\oplus$.
Let
\begin{align}\label{Eq: sequences of solution forms}
	\mathcal{S}&:=\{
	\underline{F}\in\Omega^\oplus_{sc,\n}(\M)\,|\,D\underline{F}=0\}
	\\
	&=\{
	\underline{F}\in\Omega^\oplus_{sc}(\M)\,|\,F_k\in\Omega^k_{sc,\n}(\M)\,,\,
	\mathrm{d}F_k=0\,,\,
	\delta F_k=0\,,\forall k\in\{0,\ldots,n\}
	\}\,.
\end{align}
Notice that $F_0=0$, moreover,
\begin{align}\label{Eq: symmetry of D on forms with boundary conditions}
	(D\underline{F}^{(1)},\underline{F}^{(2)})_\oplus
	=(\underline{F}^{(1)},D\underline{F}^{(2)})_\oplus
	\quad
	\forall\underline{F}^{(1)},\underline{F}^{(2)}\in\Omega^\oplus_{\n}(\M)\,,
	\operatorname{supp}(\underline{F}^{(1)})\cap\operatorname{supp}(\underline{F}^{(2)})\textrm{ compact}\,.
\end{align}

A direct application of Proposition \ref{Prop: advanced, retarded propagators} leads to the following isomorphism of vector spaces:
\begin{align}\label{Eq: isormorphism for sequences of solutions forms}
	\bigoplus_{k=1}^n\frac{\Omega_{c,\n,\delta}^{k-1}(\M)\oplus\Omega_{c,\mathrm{d}}^{k+1}(\M)}{D\Omega_{c,\n}^k(\M)}
	\simeq\bigoplus_{k=1}^n\operatorname{Sol}^k_{sc,\n}(\M)
	=\mathcal{S}
	\qquad
	\underline{\alpha}\oplus\underline{\zeta}\mapsto\underline{G}\,(\underline{\alpha}\oplus\underline{\zeta})\,,
\end{align}
where $\underline{G}:=\oplus_{k=1}^nG_k$.

\begin{proposition}\label{Prop: pre-symplectic structure on spacelike solutions}
With the notation introduced above, let $\sigma_{\mathcal{S}}\colon\mathcal{S}\times \mathcal{S}\to\mathbb{R}$ be defined by
	\begin{align}\label{Eq: pre-symplectic structure on spacelike solutions}
		\sigma_{\mathcal{S}}(\underline{F}^{(1)},\underline{F}^{(2)})
		:=(D\underline{F}^{(1),+},\underline{F}^{(2)})_\oplus\,,
	\end{align}
	where $\underline{F}^{(1)}=\underline{F}^{(1),+}+\underline{F}^{(1),-}$, $\underline{F}^{(1),+}\in\Omega^\oplus_{sfc,\n}(\M)$, $\underline{F}^{(1),-}\in\Omega^\oplus_{spc,\n}(\M)$, is an arbitrary decomposition of $\underline{F}^{(1)}$ in strictly future/past compactly supported forms.
	
	Then $\sigma_{\mathcal{S}}$ is a well-defined pre-symplectic structure on $\mathcal{S}$.
	Moreover, if $\M$ admits a finite good cover \cite{Bott_1982,Schwarz_1995} it holds
	\begin{align}\label{Eq: degeneracy of pre-symplectic structure on spacelike solutions}
		\sigma_{\mathcal{S}}(\cdot,\underline{F})=0
		\quad\Longleftrightarrow\quad
		\underline{F}=\mathrm{d}\underline{A}=\delta\underline{B}\,,
	\end{align}
	where $\underline{A}\in\Omega^\oplus_{sc}(\M)$ and $\underline{B}\in\Omega^\oplus_{sc,\n}(\M)$ ---in particular $\underline{A}\in\Omega^\oplus_{sc}(\M)$ is such that $\delta\mathrm{d}\underline{A}=0$ and $\n\lrcorner\mathrm{d}\underline{A}=0$.
\end{proposition}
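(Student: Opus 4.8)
The plan is to establish in turn that $\sigma_{\mathcal S}$ is well defined, bilinear and antisymmetric, and then to compute its radical. For \textbf{well-definedness}, suppose $\underline F^{(1)}=\underline F^{(1),+}+\underline F^{(1),-}=\widetilde F^{(1),+}+\widetilde F^{(1),-}$ are two admissible splittings. Then $\underline G:=\underline F^{(1),+}-\widetilde F^{(1),+}=\widetilde F^{(1),-}-\underline F^{(1),-}$ is simultaneously strictly future- and past-compact, hence $\underline G\in\Omega^\oplus_{c,\n}(\M)$; consequently $\operatorname{supp}(\underline G)\cap\operatorname{supp}(\underline F^{(2)})$ is compact and the symmetry relation \eqref{Eq: symmetry of D on forms with boundary conditions} gives $(D\underline G,\underline F^{(2)})_\oplus=(\underline G,D\underline F^{(2)})_\oplus=0$, the last equality because $\underline F^{(2)}\in\mathcal S$. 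Thus $(D\underline F^{(1),+},\underline F^{(2)})_\oplus$ is independent of the splitting, and bilinearity is immediate from linearity of $D$, of the splitting, and of $(\cdot\,,\cdot)_\oplus$.

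For \textbf{antisymmetry} I would route through the causal propagator $\underline G=\oplus_k G_k$. Set $\underline j^{(1)}:=D\underline F^{(1),+}$; this is compactly supported and lies in $\Omega^\oplus_{c,\n,\delta}(\M)\oplus\Omega^\oplus_{c,\mathrm d}(\M)$ (use $\delta^2=\mathrm d^2=0$ and $\n\lrcorner\delta\underline F^{(1),+}=-\delta^{\partial\M}\n\lrcorner\underline F^{(1),+}=0$ from Remark~\ref{Rmk: Hodge dual relations}). The right-inverse property \eqref{Eq: advanced, retarded propagators - right inverse property} in its extended form \eqref{Eq: advanced, retarded propagators - extension of domains} yields $G^-_k\underline j^{(1)}=\underline F^{(1),+}$ and $G^+_k\underline j^{(1)}=-\underline F^{(1),-}$, whence $\underline F^{(1)}=-\underline G\,\underline j^{(1)}$. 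Therefore $\sigma_{\mathcal S}(\underline F^{(1)},\underline F^{(2)})=(\underline j^{(1)},\underline F^{(2)})_\oplus=-(\underline j^{(1)},\underline G\,\underline j^{(2)})_\oplus$, and antisymmetry of $\sigma_{\mathcal S}$ reduces to antisymmetry of the causal propagator, $(\underline j^{(1)},\underline G\,\underline j^{(2)})_\oplus=-(\underline j^{(2)},\underline G\,\underline j^{(1)})_\oplus$. The latter I prove as usual: writing $\underline j^{(1)}=D\,G^-\underline j^{(1)}$ via \eqref{Eq: advanced, retarded propagators - left inverse property with d}--\eqref{Eq: advanced, retarded propagators - left inverse property with delta} and using that $\operatorname{supp}(G^-\underline j^{(1)})\cap\operatorname{supp}(G^+\underline j^{(2)})$ is compact (past-of-compact meets future-of-compact), relation \eqref{Eq: symmetry of D on forms with boundary conditions} gives $(\underline j^{(1)},G^+\underline j^{(2)})_\oplus=(G^-\underline j^{(1)},\underline j^{(2)})_\oplus$; the companion identity with $+\leftrightarrow-$ and a subtraction produce the claim.

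For the \textbf{radical} I first reduce $\sigma_{\mathcal S}(\cdot,\underline F)=0$ to orthogonality conditions. As $\underline F^{(1)}$ runs through $\mathcal S$, the data $\underline j^{(1)}=\delta\underline F^{(1),+}\oplus\mathrm d\underline F^{(1),+}$ runs through all of $\Omega^\oplus_{c,\n,\delta}(\M)\oplus\Omega^\oplus_{c,\mathrm d}(\M)$ by surjectivity of $\underline G$ (exactness of \eqref{Eq: short exact sequence}), and $(\underline j^{(1)},\underline F)_\oplus$ descends modulo $D\Omega^\oplus_{c,\n}(\M)$ since $(D\underline\phi,\underline F)_\oplus=(\underline\phi,D\underline F)_\oplus=0$. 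Hence $\sigma_{\mathcal S}(\cdot,\underline F)=0$ is equivalent to $(\underline\alpha,\underline F)_\oplus=0$ for all coclosed $\underline\alpha\in\Omega^\oplus_{c,\n,\delta}(\M)$ together with $(\underline\zeta,\underline F)_\oplus=0$ for all closed $\underline\zeta\in\Omega^\oplus_{c,\mathrm d}(\M)$. The implication $\Leftarrow$ is then a short integration by parts: if $\underline F=\mathrm d\underline A=\delta\underline B$ with $\n\lrcorner\underline B=0$, Green's formula turns $(\underline\zeta,\delta\underline B)_\oplus$ into $(\mathrm d\underline\zeta,\underline B)_\oplus$ plus a boundary term vanishing because $\iota_{\bM}^*\ast_g\underline B=\pm\ast^{\partial\M}\n\lrcorner\underline B=0$, while $\mathrm d\underline\zeta=0$; symmetrically $(\underline\alpha,\mathrm d\underline A)_\oplus=(\delta\underline\alpha,\underline A)_\oplus=0$ using $\n\lrcorner\underline\alpha=0$ and $\delta\underline\alpha=0$. (Note $\mathrm d\underline A=\underline F$ is automatically coclosed with $\n\lrcorner\mathrm d\underline A=0$.)

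The converse $\Rightarrow$ is the crux and is where the finite good cover enters. Degreewise, $(\underline\zeta,\underline F)_\oplus=\pm\int_\M\zeta_m\wedge\ast_g F_m$ is the de Rham pairing of $[\zeta_m]\in H^m_c(\M)$ with $[\ast_g F_m]$; since $\n\lrcorner F_m=0$ forces $\iota_{\bM}^*\ast_g F_m=\pm\ast^{\partial\M}\n\lrcorner F_m=0$, the class $[\ast_g F_m]$ lives in the \emph{relative} cohomology $H^{n-m}(\M,\bM)$, and Poincar\'e--Lefschetz duality (valid as $\M$ admits a finite good cover, \cf \cite{Bott_1982}) identifies vanishing against all of $H^m_c(\M)$ with $[\ast_g F_m]=0$; a relative primitive $\ast_g F_m=\mathrm d\eta$ with $\iota_{\bM}^*\eta=0$ then gives $\underline F=\delta\underline B$ with $\n\lrcorner\underline B=0$. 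Dually, the coclosed conditions pair $[\ast_g\alpha_m]\in H^{n-m}_c(\M,\bM)$ against $[F_m]\in H^m(\M)$ and force $[F_m]=0$, yielding $\underline F=\mathrm d\underline A$ with no boundary constraint on $\underline A$. \textbf{The main obstacle} is precisely this step: one must run these dualities in the spacelike-compact category rather than the compactly supported one, so that the primitives $\underline A,\underline B$ inherit spacelike-compact support. I would handle this using the splitting $\M\cong\RR\times\Sigma$ to reduce the relevant (relative, spacelike-compact) cohomologies to cohomologies of $\Sigma$, where finiteness and duality hold, and then transport supports and the boundary condition $\n\lrcorner\underline B=0$ back through $\ast_g$.
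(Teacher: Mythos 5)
Your proposal is correct and, for two of its three parts, follows the paper's own strategy: the well-definedness argument (difference of two splittings is compactly supported, then apply the formal self-adjointness relation \eqref{Eq: symmetry of D on forms with boundary conditions}) is identical, and the characterization of the radical proceeds, as in the paper, by using the isomorphism \eqref{Eq: isormorphism for sequences of solutions forms} to reduce $\sigma_{\mathcal S}(\cdot,\underline F)=0$ to the vanishing of the pairings $(\underline\alpha,\underline F)_\oplus$ and $(\underline\zeta,\underline F)_\oplus$ over all of $\Omega^\oplus_{c,\n,\delta}(\M)$ and $\Omega^\oplus_{c,\mathrm d}(\M)$, followed by a cohomological duality; the paper phrases that last step through the dual spaces $H_{k,c,\n}(\M)^*\simeq H^k(\M)$ and $H^k_c(\M)^*\simeq H_{k,\n}(\M)$, while you invoke Poincar\'e--Lefschetz duality with relative classes, which is the same content in different clothing. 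Where you genuinely diverge is skew-symmetry: the paper proves it by a short direct computation, splitting both arguments and applying \eqref{Eq: symmetry of D on forms with boundary conditions} repeatedly, whereas you identify $\underline F^{(1)}=-\underline G\,\underline j^{(1)}$ with $\underline j^{(1)}=D\underline F^{(1),+}$ via the extended right-inverse property \eqref{Eq: advanced, retarded propagators - extension of domains}--\eqref{Eq: advanced, retarded propagators - right inverse property} and reduce the claim to the skew-symmetry of $(\cdot\,,\underline G\,\cdot)_\oplus$, proved by the standard Green-operator argument. Your route costs slightly more machinery (the extensions of $G_k^\pm$ and compactness of $J^-(K_1)\cap J^+(K_2)$) but buys the identity $\sigma_{\mathcal S}(\underline G(\cdot),\underline G(\cdot))=\varsigma_{\mathcal S}(\cdot,\cdot)$ essentially for free, which the paper only records in a subsequent remark. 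The one point you leave as a sketch --- upgrading the primitives $\underline A,\underline B$ to spacelike-compact support via the splitting $\M\cong\RR\times\Sigma$ --- is honestly flagged, and the paper's own proof is no more explicit there (it even produces only $B_{k+1}\in\Omega^{k+1}_{\n}(\M)$ while the statement asserts $\underline B\in\Omega^\oplus_{sc,\n}(\M)$), so this is not a defect relative to the printed argument, though it would deserve a full treatment in a self-contained write-up.
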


\begin{proof}
	We adapt the arguments of \cite{Benini_2016,Dappiaggi-Drago-Longhi-2020} to the current case.
	To begin with, we observe that a decomposition of the form $\underline{F}=\underline{F}^++\underline{F}^-$ can always be realized by multiplying $\underline{F}$ by a suitable time-dependent function $\chi\in C^\infty(\M)$: Notice that this also preserves the boundary conditions.
	Moreover, if $D\underline{F}=0$ then $D\underline{F}^+=-D\underline{F}^-$, therefore, $D\underline{F}^+\in\Omega^\oplus_c(\M)$.
	This implies that the pairing $(D\underline{F}^{(1),+},\underline{F}^{(2)})_\oplus$ is well-defined for all $\underline{F}^{(1)},\underline{F}^{(2)}\in\mathcal{S}$.
	
	Next we observe that $\underline{F}^{(1)},\underline{F}^{(2)}\mapsto(D\underline{F}^{(1),+},\underline{F}^{(2)})_\oplus$ is in fact independent on the splitting $\underline{F}^{(1)}=\underline{F}^{(1),+}+\underline{F}^{(1),-}$.
	Indeed, if $\underline{F}^{(1)}=\underline{F}^{(1),+\prime}+\underline{F}^{(1),-\prime}$ is another such splitting we have $\underline{F}^{(1),+\prime}-\underline{F}^{(1),+}=\underline{F}^{(1),-}-\underline{F}^{(1),-\prime}$ which ensures that $\underline{F}^{(1),+\prime}-\underline{F}^{(1),+}\in\Omega^k_{c,\n}(\M)$.
	This implies that
	\begin{align*}
		(D\underline{F}^{(1),+\prime},\underline{F}^{(2)})_\oplus
		-(D\underline{F}^{(1),+},\underline{F}^{(2)})_\oplus
		&=(D(\underline{F}^{(1),+\prime}-\underline{F}^{(1),+}),\underline{F}^{(2)})_\oplus
		\\
		&=(\underline{F}^{(1),+\prime}-\underline{F}^{(1),+},D\underline{F}^{(2)})_\oplus
		=0\,,
	\end{align*}
	where we applied Equation \eqref{Eq: symmetry of D on forms with boundary conditions}.
	
	Thus, the map $\sigma_{\mathcal{S}}\colon\mathcal{S}^2\to\mathbb{R}$ is well-defined and readily bilinear.
	We now prove that it is skew-symmetric, therefore, it provides a pre-symplectic structure on $\mathcal{S}$.
	To this avail let $\underline{F}^{(1)},\underline{F}^{(2)}\in\mathcal{S}$ and consider two decompositions $\underline{F}^{(j)}=\underline{F}^{(j),+}+\underline{F}^{(j),-}$, $j\in\{1,2\}$, as above.
	Then repeatedly using Equation \eqref{Eq: symmetry of D on forms with boundary conditions} we have
	\begin{align*}
		\sigma_{\mathcal{S}}(\underline{F}^{(1)},\underline{F}^{(2)})
		&=(D\underline{F}^{(1),+},\underline{F}^{(2)})_\oplus
		\\
		&=(D\underline{F}^{(1),+},\underline{F}^{(2),+})_\oplus
		+(D\underline{F}^{(1),+},\underline{F}^{(2),-})_\oplus
		\\
		&=-(D\underline{F}^{(1),-},\underline{F}^{(2),+})_\oplus
		+(\underline{F}^{(1),+},D\underline{F}^{(2),-})_\oplus
		\\
		&=-(\underline{F}^{(1),-},D\underline{F}^{(2),+})_\oplus
		-(\underline{F}^{(1),+},D\underline{F}^{(2),+})_\oplus
		\\
		&=-(\underline{F}^{(1)},D\underline{F}^{(2),+})_\oplus
		=-\sigma_{\mathcal{S}}(\underline{F}^{(1)},\underline{F}^{(2)})\,.
	\end{align*}

	Finally, let assume that $\M$ has a finite cover and let $\underline{F}\in\mathcal{S}$ be such that $\sigma_{\mathcal{S}}(\underline{F}',\underline{F})=0$.
	We observe that each component $F_k$ of $\underline{F}\in\mathcal{S}$ induces an element, still denoted by $F_k$, of the dual space $H_{k,c,\n}(\M)^*$ where
	\begin{align*}
		H_{k,c,\n}(\M):=\frac{\{\alpha_k\in\Omega^k_{c,\n}(\M)\,|\,\delta\alpha_k=0\}}
		{\delta\Omega^{k+1}_{c,\n}(\M)}\,.
	\end{align*}
	Indeed $F_k([\alpha_k]):=(\alpha_k,F_k)$ is well-defined for all $[\alpha]\in H_{k,c,\n}(\M)$ on account of the identity
	$(\delta\beta_{k+1},F_k)=(\beta_{k+1},\mathrm{d}F_k)=0$ for all $\beta_{k+1}\in\Omega^{k+1}_{c,\n}(\M)$.
	Notice that, since $\M$ has a good cover, $H_{k,c,\n}(\M)^*\simeq H^k(\M)$, where $H^k(\M)$ is the standard $k$-th de Rham cohomology group, \textit{cf.} \cite{Bott_1982, Schwarz_1995} and \cite[App. C]{Dappiaggi-Drago-Longhi-2020}.
	A similar argument shows that the assignment $\alpha_k\mapsto F_k([\zeta_k]):=(\zeta_k,F_k)$ defines an element in $H^k_c(\M)^*\simeq H_{k,\n}(\M)$.
	
	On account of \eqref{Eq: isormorphism for sequences of solutions forms} we have
	\begin{align*}
		\underline{F}'=\underline{G}(\underline{\alpha}\oplus\underline{\zeta})\,,
		\qquad \underline{\alpha}\oplus\underline{\zeta}\in\bigoplus\limits_{k=1}^n
		\frac{\Omega^{k-1}_{c,\n,\delta}(\M)\oplus\Omega^{k+1}_{c,\mathrm{d}}(\M)}{D\Omega^k_{c,\n}(\M)}\,.
	\end{align*}	
	Thus, we may set $\underline{F}^{\prime,+}:=\underline{G}^+(\underline{\alpha}\oplus\underline{\zeta})$ which leads to
	\begin{align*}
		\sigma_{\mathcal{S}}(\underline{F}',\underline{F})
		=(D\underline{G}^+(\underline{\alpha}\oplus\underline{\zeta}),\underline{F})_\oplus
		=(\underline{\alpha}\oplus\underline{\zeta},\underline{F})_\oplus\,.
	\end{align*}
	The condition $\sigma_{\mathcal{S}}(\underline{F}',\underline{F})=0$ and the arbitrariness of $\underline{\alpha}$ implies in particular that $(\alpha_k,F_k)=0$ for all $\alpha_k\in\Omega^k_{c,\n,\delta}(\M)$ and $k\in\{0,\ldots,n\}$.
	This entails that $F_k=0\in H_{k,c,\n}(\M)^*\simeq H^k(\M)$, that is, $F_k=\mathrm{d}A_{k-1}$: Thus, $\underline{F}=\mathrm{d}\underline{A}$.
	With a similar argument, the arbitrariness of $\underline{\zeta}$ leads to $(\zeta_k,F_k)=0$ for all $\zeta_k\in\Omega^k_{c,\mathrm{d}}(\M)$ which implies $F_k=0\in H^k_c(\M)^*\simeq H_{k,\n}(\M)$, therefore $F_k=\delta B_{k+1}$ for $B_{k+1}\in\Omega^{k+1}_{\n}(\M)$.
	
	Conversely, by direct inspection any element $\underline{F}\in\mathcal{S}$ such that $\underline{F}=\mathrm{d}\underline{A}=\delta\underline{B}$ for $\underline{B}\in\Omega^\oplus_{\n}(\M)$ fulfils $\sigma_{\mathcal{S}}(\;,\underline{F})=0$.
\end{proof}

\begin{remarks}
	\noindent
	\begin{enumerate}
		\item
		The pre-symplectic form $\sigma_{\mathcal{S}}$ involves forms of different degrees in a non-trivial fashion.
		In particular, this spoils the possibility of inducing a pre-symplectic form on a single component of $\underline{F}\in\mathcal{S}$.
		At its core, this difficulty is due to the different degrees in the domain and codomain of the operators $G^\pm_k$, \textit{cf.} Proposition \ref{Prop: advanced, retarded propagators}.
		Moreover, the degeneracy space of $\sigma_{\mathcal{S}}$ coincides with the space of spacelike solutions to Maxwell's equation for the electromagnetic potential \cite[Def. 28]{Dappiaggi-Drago-Longhi-2020}.
		These two facts do not allow a clear physical interpretation of the resulting structure.
		
		For the purpose of quantizing the solution space $\operatorname{Sol}^k_{sc,\n}(\M)$ for fixed $k$ it is likely more appropriate to proceed as in \cite{Dappiaggi-Lang-2012}, which is based on the connection with the solution space to the wave operator $\square$.
		For the case at hand, such connection would require the identification of appropriate boundary conditions which guarantee formal self-adjointness of $\square$.
		The latter can be easily determined by observing that any $F\in\operatorname{Sol}^k_{sc,\n}(\M)$ fulfils $\n\lrcorner F=0$ as well as $\n\lrcorner\mathrm{d}F=0$.
		Moreover, $(\square\alpha_k,\beta_k)=(\alpha_k,\square\beta_k)$ if $\alpha_k,\beta_k\in\Omega^k(\M)$ are such that $\operatorname{supp}(\alpha_k)\cap\operatorname{supp}(\beta_k)$ is compact and $\n\lrcorner\alpha_k=\n\lrcorner\beta_k=0$ as well as $\n\lrcorner\mathrm{d}\alpha_k=\n\lrcorner\mathrm{d}\beta_k=0$.
		Forms abiding by these boundary conditions were investigated in \cite{Dappiaggi-Drago-Longhi-2020}, which deals with the quantization of the electromagnetic vector potential in the framework of gauge theories.
		\item
		Similarly to \cite{Benini_2016,Dappiaggi-Drago-Longhi-2020} one may promote the isomorphism of vector spaces \eqref{Eq: isormorphism for sequences of solutions forms} to an isomorphism of pre-symplectic vector spaces.
		This requires to define a pre-symplectic form
		\begin{multline*}
			\varsigma_{\mathcal{S}}\colon \left(\bigoplus_{k=1}^n
			\frac{\Omega^{k-1}_{c,\n,\delta}(\M)\oplus\Omega^{k+1}_{c,\mathrm{d}}(\M)}{D\Omega^k_{c,\n}(\M)}\right)^2
			\to\mathbb{R}
			\\
			\varsigma_{\mathcal{S}}(\underline{\alpha}^{(1)}\oplus\underline{\zeta}^{(1)},\underline{\alpha}^{(2)}\oplus\underline{\zeta}^{(2)})
			:=(\underline{\alpha}^{(1)}\oplus\underline{\zeta}^{(1)},\underline{G}(\underline{\alpha}^{(2)}\oplus\underline{\zeta}^{(2)}))_\oplus\,,
		\end{multline*}
		from which $\sigma_{\mathcal{S}}(\underline{G}(\underline{\alpha}^{(1)}\oplus\underline{\zeta}^{(1)}),\underline{G}(\underline{\alpha}^{(2)}\oplus\underline{\zeta}^{(2)}))=\varsigma_{\mathcal{S}}(\underline{\alpha}^{(1)}\oplus\underline{\zeta}^{(1)},\underline{\alpha}^{(2)}\oplus\underline{\zeta}^{(2)})$ follows by decomposing $\underline{G}(\underline{\alpha}^{(1)}\oplus\underline{\zeta}^{(1)})=\underline{G}^+(\underline{\alpha}^{(1)}\oplus\underline{\zeta}^{(1)})-\underline{G}^-(\underline{\alpha}^{(1)}\oplus\underline{\zeta}^{(1)})$ together with the observation that $D\underline{G}^+(\underline{\alpha}^{(1)}\oplus\underline{\zeta}^{(1)})=\underline{\alpha}^{(1)}\oplus\underline{\zeta}^{(1)}$.
	\end{enumerate}
\end{remarks}

\vspace{0.5cm}

\end{document}